\documentclass[11pt]{elsarticle}
\UseRawInputEncoding
\usepackage{amssymb}
%%%%%%%%%%%%%%%
%%-----------------------------------------------------------------------------------------%
%%-----------------------------------------------------------------------------------------%
\usepackage{multirow}
\usepackage{subcaption}
\usepackage{float}
\usepackage{booktabs}
\usepackage[utf8]{inputenc}
\usepackage[T1]{fontenc}
%\DeclareUnicodeCharacter{221A}{$\sqrt$}
%%-----------------------------------------------------------------------------------------%
%%-----------------------------------------------------------------------------------------%
\usepackage{amsfonts,amssymb,latexsym}
\usepackage[centertags]{amsmath}
\usepackage{algorithmic}
\usepackage{newlfont}
\usepackage{graphics}
\usepackage{graphicx}
\usepackage{color}
\usepackage{tikz}
\usepackage{array}
\usepackage{booktabs}
%\usepackage{authblk}
% or use the graphicx package for more complicated commands
%\usepackage{graphicx}
%\usepackage{amsfonts,amssymb,latexsym}
%\usepackage[inactive]{srcltx}
%\usepackage[centertags]{amsmath}
\usepackage{newlfont}
\usepackage{graphics}
\usepackage{color}
\usepackage{booktabs}
\usepackage{tikz}
\usepackage{amsthm}
%\usepackage{authblk}
% or use the graphicx package for more complicated commands
%\usepackage{graphicx}
%\usepackage{hyperref}
%-----------------------------------------------------------------
\newtheorem{theorem}{Theorem}[section]
\newtheorem{corollary}{Corollary}[theorem]
\newtheorem{lemma}{Lemma}[section]
\newtheorem{definition}{Definition}[section]

\newtheorem{remark}{Remark}[section]
%\theoremstyle{plain}
%\newtheorem{remark}{Remark}
%\newtheorem{definition}{Definition}
%\newtheorem{example}{Example}
%\newtheorem{lemma}{Lemma}
% MATH -----------------------------------------------------------
%\newcommand{\norm}[1]{\left\Vert#1\right\Vert}
%\newcommand{\abs}[1]{\left\vert#1\right\vert}
%\newcommand{\black}[1]{{\color{black}{#1}}}    %RS
%\newcommand{\black}[1]{{\color{black}{#1}}}  %RS: MM: please use this for your changes
%\newcommand{\green}[1]{{\color{green}{#1}}} % just for comments that do NOT go
% into the paper
% If you agree with all black changes, uncomment
%\newcommand{\black}[1]{{#1}}
%\newcommand{\black}[1]{{#1}}
% To get rid of all green comments, uncomment
%\newcommand{\green}[1]{}
% and comment the old definitions.
%\newcommand{\black}[1]{}
\setlength{\topmargin}{0cm} \setlength{\oddsidemargin}{0cm}
\setlength{\evensidemargin}{0cm} \setlength{\textheight}{228mm}
\setlength{\textwidth}{160mm}

%%%%%%%%%%%%%%
\makeatletter
\def\ps@pprintTitle{
  \let\@oddhead\@empty
  \let\@evenhead\@empty
  \let\@oddfoot\@empty
  \let\@evenfoot\@oddfoot
}
%\makeatother
%\newcommand{\black}[1]{{\color{black}{#1}}}    %RS
%\newcommand{\black}[1]{{\color{black}{#1}}}  %RS: MM: please use this for your changes  
%\newcommand{\green}[1]{{\color{green}{#1}}} % just for comments that do NOT go
% into the paper  
% If you agree with all black changes, uncomment
%\newcommand{\black}[1]{{#1}}  
%\newcommand{\black}[1]{{#1}}  
% To get rid of all green comments, uncomment
%\newcommand{\black}[1]{} 
% and comment the old definitions.
%\newcommand{\black}[1]{}
\input{epsf}
%%%%%%%%%%%%%%%
%%-----------------------------------------------------------------
\begin{document}
\begin{frontmatter}
%-----------------------------------------------------------------------------------------------------------------------------------------%
%-----------------------------------------------------------------------------------------------------------------------------------------%
\title{A shape preserving quasi-interpolation \\ operator based on a new transcendental RBF}
\author[1]{Mohammad Heidari}
\ead{Std_M.Heidari@khu.ac.ir}
\author[1]{Maryam Mohammadi\corref{c2}}
\ead{m.mohammadi@khu.ac.ir}
\cortext[c2]{Corresponding author}
\author[2]{Stefano De Marchi}
\ead{demarchi@math.unipd.it}
\address[1]{Faculty of Mathematical Sciences and Computer, Kharazmi University, Teheran, Iran}%50 Taleghani Avenue, 1561836314 Tehran, Iran.}
\address[2]{Department of Mathematics "Tullio Levi-Civita", University of Padova, Italy}
%-----------------------------------------------------------------------------------------------------------------------------------------%
%%-----------------------------------------------------------------------------------------------------------------------------------------%
\begin{abstract}
It is well-known that the univariate Multiquadric quasi-interpolation operator is constructed based on the piecewise linear interpolation  by $|x|$.
In this paper, we first introduce a new transcendental RBF based on  the  hyperbolic  tangent  function as an smooth approximant to $\phi(r)=r$ with higher accuracy and better convergence properties than the MQ RBF $\sqrt{r^2+c^2}$. Then the Wu–Schaback's quasi-interpolation formula is rewritten using the proposed RBF. It
preserves convexity and monotonicity. We prove that the proposed scheme converges with a rate of $O(h^2)$. So it has a higher degree of smoothness. Some numerical experiments are given in order to demonstrate the efficiency and accuracy of the method.
\end{abstract}
\begin{keyword}
Radial basis functions (RBFs) \sep quasi-interpolation\sep  hyperbolic tangent function
\end{keyword}
%{AMS subject classification}: 65D05, 65D12, 65D20.
\end{frontmatter}
% PACS codes here, in the form: \PACS code \sep code
%%%%%%%%%%%%%%%%%%%%%%%%%%%%%%%%%%%%%%%%
\section{Introduction}
Given a set of $n$ distinct (scattered) points $\{x_j\}_{j=0}^n\in\Omega\subseteq\mathbb{R}^d$ and corresponding data values $\{f_j\}_{j=0}^n\in\mathbb{R}$, a standard way to interpolate a function $f\in C^1:\Omega\rightarrow\mathbb{R}$ is by using
\begin{eqnarray}\label{meq1}
\mathcal{L}f(x)=\sum_{j=0}^n\lambda_j\mathcal{X}(x-x_j),
\end{eqnarray}
with the coefficients $\lambda_j$ determined by the interpolation conditions $\mathcal{L}f(x_j)=f_j,~j=0,\ldots,n,$ where $\mathcal{X}(\cdot)$ is an interpolation kernel. Many authors use Radial Basis Functions (RBFs)
to solve the interpolating problem (\ref{meq1}), that is $\mathcal{X}(x-x_j)=\phi(\|x-x_j\|),$ ($\|\cdot \|$ is the Euclidean norm)
with $\phi: [0,\infty) \rightarrow \mathbb{R}$, is some radial function  \cite{wendland}. Then, the coefficients $\lambda_j$ are determined solving a symmetric linear system $A\lambda=f$, where $A=\left[\phi(\|x_i-x_j\|)\right]_{0\leq i,j\leq n}.$ RBF method provides excellent interpolants for high dimensional scattered data sets. The corresponding theory had been extensively studied by many researchers (see e.g \cite{2high, 4high, 3high, 5high, 6high, 7high, 8high, wendland, 10high, 11high}). That is why in the last few decades, RBFs have been widely applied in a number of fields such as multivariate function approximation, neural networks and solution of differential
and integral equations (see e.g \cite{14high, 16high, 17high, 18high, 19high, 12high, 13high, mohammadi-meshless, mohammadi-onthe,  21high,  mohammadi-amc}).
The Multiquadric (MQ) RBF 
\begin{eqnarray}\label{meq0}
\phi_j(x)=\sqrt{{\|x-x_j\|}^2+c^2},
\end{eqnarray}
proposed by Hardy \cite{Hardy1}, is undoubtedly the most popular RBF that is used
in many applications and is representative of the class of global infinitely differentiable RBFs. Hardy \cite{2ref} summarized the achievement of study of MQ from 1968 to 1988 and showed that it can be applied in hydrology, geodesy, photogrammetry, surveying and mapping, geophysics, crustal movement, geology, mining and so on.
In the survey paper \cite{Franke1}, Franke pointed out that MQ interpolation was the best among 29 scattered data interpolation
methods in terms of timing, storage, accuracy, visual pleasantness of surface reconstruction and ease to implement.
The existence of the solution of the associated interpolation problem was shown later on by Micchelli \cite{5high}. 
Although the MQ interpolation is always solvable, the resulting matrix quickly becomes ill-conditioned as the number of points increases.
Researchers concentrated on a weaker form of (\ref{meq1}), known as {\it quasi-interpolation}, that holds only for polynomials of some low degree $m$, i.e.,
$$\mathcal{L}p_m(x_j)=p_m(x_j),\quad\forall p_m\in\Pi_m^d,$$
for all $0\leq j\leq n$, where $\Pi_m^d$ denotes the space of polynomials of degree less and equal to $m$ in $\mathbb{R}^d$. Beatson and Powell \cite{Beatson1, Powell1} first proposed a univariate quasi-interpolation formula
where $\mathcal{X}$ in (\ref{meq1}), is a linear combination of MQ RBF and low degree polynomials. Their idea is based on the fact that the MQ degenerates to 
$|x-x_j|$, for $c=0$ and $d=1$, hence quasi-interpolation (\ref{meq1}) is the usual piecewise linear interpolation which reproduces linear polynomials as $c$ tends to zero.
However, their operator requires the approximation of the derivatives of the function at endpoints, which is not convenient for practical purposes.
Thus, Wu and Schaback \cite{schaback-quasi} constructed another univariate MQ quasi-interpolation operator with without the use of derivatives at the endpoints. Given data
\begin{eqnarray*}
a=x_{0}<x_{2}<\dotsb< x_{n} = b \qquad h := \max_{2\leq j\leq n} (x_{j} - x_{j-1}),
\end{eqnarray*}
Wu–Schaback's MQ quasi-interpolation formula is 
\begin{eqnarray}\label{meq2}
(\mathcal{L} _{{MQ}}f)(x)=f_{0}\alpha_{0}(x)+f_{1}\alpha_{1}(x)+\sum_{j=2}^{n-2}f_{j}\psi_{j}(x)+f_{n-1}\alpha_{n-1}(x)+f_{n}\alpha_{n}(x)\label{eq04}
\end{eqnarray}
where
\begin{align}
\alpha_{0}(x)&=\dfrac{1}{2}+\dfrac{\phi_{1}(x)-(x-x_{0})}{2(x_{1}-x_{0})}, \notag \\
\alpha_{1}(x)&=\dfrac{\phi_{2}(x)-\phi_{1}(x)}{2(x_{2}-x_{1})}-\dfrac{\phi_{1}(x)-(x-x_{0})}{2(x_{1}-x_{0})}, \notag\\
\alpha_{n-1}(x)&=\dfrac{(x_{n}-x)-\phi_{n-1}(x)}{2(x_{n}-x_{n-1)}}-\dfrac{\phi_{n-1}(x)-\phi_{n-2}(x)}{2(x_{n-1}-x_{n-2})}, \notag\\%\label{eq05} \\
\alpha_{n}(x)&=\dfrac{1}{2}+\dfrac{\phi_{n-1}(x)-(x_{n}-x)}{2(x_{n}-x_{n-1})}, \notag \\
\psi_{j}(x)&=\dfrac{\phi_{j+1}(x)-\phi_{j}(x)}{2(x_{j+1}-x_{j})}-\dfrac{\phi_{j}(x)-\phi_{j-1}(x)}{2(x_{j}-x_{j-1)}},\qquad 2\leq j \leq n-2.\notag
\end{align}
The main advantage of this formula is that it does not require the solution of any linear system.
Instead, the formula uses the function values $f_j$ at $x_j$ as its coefficients. The drawback
is that instead of $c=O(h)$, one needs to use a smaller shape parameter
$c^2|\log c|=O(h^2)$ in order to achieve quadratic convergence, resulting in a lower
smoothness. Note that for $c=0$, the basis functions given in quasi-interpolant $\mathcal{L}_{MQ}f$ are cardinal with respect to $\{x_j\}_{j=0}^n$. For a general quasi-interpolation operator $\mathcal{L}$ we can state the following definitions.
\begin{definition}
The quasi-interpolation operator $\mathcal{L}$ constructed at the data points $\{(x_j,f_j)\}$, is called to be monotonicity-preserving, if the first order divided difference
$f[x_j, x_{j+1}]$ is nonnegative (non-positive)
implies that
$\left(\mathcal{L}f\right)'$ is also nonnegative (non-positive). 
\end{definition}

%%%%%%%%%%%%%%%%%%%%%%
\begin{definition}
The quasi-interpolation operator $\mathcal{L}$ constructed at the data points $(x_j,f_j)$, is called to be
convexity-preserving if the second order divided difference
$f[x_{j-1}, x_j, x_{j+1}]$ is nonnegative (non-positive, zero)
implies that
$\left(\mathcal{L}f\right)''$ is also nonnegative (non-positive, zero). 
\end{definition}

%%%%%%%%%%%%%%%%%%%%%%%%
Since $\sqrt{x^2+c^2}$ tends to $|x|$ as $c$ tends to zero, and radial basis interpolation (as well
as the quasi-interpolation) based on $|x|$ is piecewise linear, Wu and Schaback claimed that the shape-preserving
properties of piecewise linear interpolation can be expected to hold for quasi-interpolation
with multiquadrics, too. Actually, they first showed that the quasi-interpolation operator of
Beatson and Powell is indeed convexity preserving. Then they proved that the quasi-interpolation operator (\ref{eq04}) is monotonicity and convexity 
preserving. In 2004, Ling \cite{9mam} proposed a multilevel quasi-interpolation operator and proved that it converges with a rate of $O(h^{2.5})\log h$ as $c=O(h)$.
In 2009, Feng and Li \cite{11mam} constructed a shape-preserving quasi-interpolation operator by shifts of cubic MQ functions proving that it can produce
an error of $O(h^2)$ as $c=O(h)$. Wang et al. \cite{12mam} proposed an improved univariate MQ quasi-interpolation operator,
by using Hermite interpolating polynomials, with convergence rate heavily depending on the shape parameter $c$. Jiang et
al. \cite{13mam} proposed two new multilevel univariate MQ quasi-interpolation operators with higher approximation order.

Ling proposed a multidimensional quasi-interpolation operator using the dimension-splitting
multiquadric basis function approach \cite{ling}, and Wu et al. modified their idea by using multivariate divided difference and the idea of the superposition \cite{wu2015}.

%such that satisfying any degree polynomial reproduction property and a higher approximation order \cite{wu2015}.
Gao and Wu \cite{14mam} studied the quasi-interpolation for the linear functional data rather than the discrete function values.
Moreover, MQ quasi-interpolation has been successfully applied in a wide range of fields. For example, in 2007, Wang and Wu \cite{15mam}
applied the operator (\ref{meq2}) to tackle approximate implicitization of parametric curves. In 2011, Wu \cite{16mam} presented a new approach to construct
the so-called shape preserving interpolation curves based on MQ quasi-interpolation (\ref{meq2}). 
Hon and Wu \cite{17mam}, Chen and
Wu \cite{18mam, 19mam}, Jiang and Wang \cite{20mam}, and other researches provided some successful examples using MQ quasi-interpolation operators to
solve different types of partial differential equations. 
%%%%%%%%%%%%%%%%%%%%%%%

In this paper, in the next section we introduce a new quasi-interpolation operator based on the hyperbolic tangent function, that is the function \begin{equation}\label{gfun}g(x)=x\tanh\left({x \over c}\right),\;\; \,c>0\end{equation} which leads to a smooth and shape preserving interpolation operator with $O(h^2)$ rate of convergence. In section 3, we discuss its accuracy providing an error estimate. Numerical experiments are presented in section 4 with the aim of comparing the accuracy of our quasi-interpolation scheme with that of Wu and Schaback's, and also verifying the
convergence rate of new quasi-interpolation operator by examples. The last section summarizes the conclusion and some further works.
%%%%%%%%%%%%%%%%%%%%%%%%%%%%%%%%%%%%%%%%
\section{Quasi-interpolation operator based on a new transcendental RBF}
In this section, we first analyse a new approximation of $|x|$ based on the hyperbolic tangent, with better accuracy than the MQ RBF 
$\sqrt{x^2+c^2}$. The general question is, are there any good approximations
of the absolute value function which are smooth? One simple approximation is
MQ RBF $\sqrt{x^2+c^2}$. Carlos Ramirez et al. \cite{3bagul} proved that $\sqrt{x^2+c^2}$ is the most computationally efficient and smooth approximation of $|x|$, 
while S. Voronin et al. \cite{4bagul} proved the following Lemma.
%%%%%%%%%%
\begin{lemma}
The approximation of $|x|$ by the multiquadrics $g(x) = \sqrt{x^2+c^2}, c \in \mathbb{R}_+$ satisfies
\begin{eqnarray*}
\left| \left| x \right| -\sqrt{x^{2}+c^{2}} \right|\leq c &,\\
\left| x \right| \leq \sqrt{x^{2}+c^{2}} &.  
\end{eqnarray*}
\end{lemma}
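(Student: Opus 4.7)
The plan is to handle the two inequalities in the natural order, using the second to simplify the first. The second inequality $|x|\le\sqrt{x^2+c^2}$ is immediate from monotonicity of the square root: since $c^2\ge 0$ we have $x^2\le x^2+c^2$, and taking square roots gives $|x|=\sqrt{x^2}\le\sqrt{x^2+c^2}$.

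Having established this, the absolute value in the first inequality can be dropped, reducing the claim to $\sqrt{x^2+c^2}-|x|\le c$. I would prove this by the standard rationalization trick, multiplying and dividing by the conjugate:
\begin{equation*}
\sqrt{x^2+c^2}-|x| \;=\; \frac{(x^2+c^2)-x^2}{\sqrt{x^2+c^2}+|x|} \;=\; \frac{c^2}{\sqrt{x^2+c^2}+|x|}.
\end{equation*}
Since $\sqrt{x^2+c^2}\ge\sqrt{c^2}=c$ and $|x|\ge 0$, the denominator is at least $c$, so the quotient is at most $c^2/c=c$, giving the desired bound.

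An equivalent and even shorter route would be to square both sides of $\sqrt{x^2+c^2}\le|x|+c$ (both sides are nonnegative, so squaring preserves the inequality), reducing to the trivially true $0\le 2c|x|$. I would mention this alternative only briefly, since the rationalization argument is slightly more informative: it exhibits the gap $\sqrt{x^2+c^2}-|x|$ explicitly as $c^2/(\sqrt{x^2+c^2}+|x|)$, which makes clear that the bound $c$ is attained only at $x=0$ and that the approximation error decays like $c^2/(2|x|)$ for $|x|\gg c$. There is no real obstacle here; the only thing to be careful about is the order of the two inequalities, since the first one is naturally proved assuming the second.
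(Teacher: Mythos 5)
Your proof is correct. Note, however, that the paper itself gives no proof of this lemma: it is stated as a known result, cited to S.~Voronin et al., so there is no in-paper argument to compare yours against. Your two steps --- monotonicity of the square root for $|x|\le\sqrt{x^2+c^2}$, and the conjugate identity $\sqrt{x^2+c^2}-|x|=c^2/\bigl(\sqrt{x^2+c^2}+|x|\bigr)$ with denominator bounded below by $c$ --- are the standard way to establish both inequalities, and your observation that the bound $c$ is attained exactly at $x=0$ while the error decays like $c^2/(2|x|)$ is a useful refinement consistent with the paper's later comparison of the multiquadric against $x\tanh(x/c)$.
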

%%%%%%%%%%%%%

As noticed by Gauss in \cite{guass}, the hyperbolic tangent can be written using the continued fraction 
\begin{eqnarray*}
\tanh(x)=\frac{x}{1+\frac{x^2}{3+\frac{x^2}{5+\cdots}}}.
\end{eqnarray*}
This fact shows immediately that the function $\displaystyle g(x)= x \tanh\left(\frac{x}{c}\right)$ is a nonnegative function that indeed can be used to approximate $|x|.$

Since for the hyperbolic tangent   
\begin{eqnarray*}
\displaystyle\lim_{c\rightarrow 0^+} \tanh\left(\frac{x}{c}\right)=\left\{\begin{array}{ll}1, & x>0,\\
0, &x=0\\
-1, & x<0 .
\end{array}.\right.
\end{eqnarray*}
we then have the approximation 
$$x\tanh\left(\frac{x}{c}\right)\approx |x|.$$ Now, we show that the approximation of $|x|$ by $x\tanh\left(\frac{x}{c}\right)$ is more accurate than that given by the multiquadric.

%%%%%%%%%%%%%%%%%
\begin{lemma}
The approximation of $|x|$ by $g(x) = x \tanh\left(\frac{x}{c}\right), c\in \mathbb{R}_+$ satisfies
\begin{align}
\left|  \left| x \right| -x\tanh \left( {\frac {x}{c}} \right) \right|&\leq 0.28c <c,\label{nmeq1}\\
x\tanh \left( {\frac {x}{c}} \right) &\leq \left| x \right|. \label{neq2} 
\end{align}
\end{lemma}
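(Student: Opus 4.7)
The plan is to reduce both inequalities to the case $x \geq 0$, using the fact that $g(x) = x\tanh(x/c)$ is an even function of $x$ (product of two odd functions) and $|x|$ is even; for $x < 0$ the identities $g(-x) = g(x)$ and $|-x| = |x|$ transfer both bounds automatically.

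For (\ref{neq2}), I would simply note that $\tanh$ maps $\mathbb{R}$ into $(-1,1)$ and that $x$ and $\tanh(x/c)$ always share the same sign, hence $x\tanh(x/c) = |x|\,|\tanh(x/c)| \leq |x|$, which is the desired inequality.

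For (\ref{nmeq1}), I would use the algebraic identity $1 - \tanh(y) = 2/(e^{2y}+1)$ to rewrite the error. For $x \geq 0$,
\[
\bigl||x| - x\tanh(x/c)\bigr| = x\bigl(1 - \tanh(x/c)\bigr) = \frac{2x}{e^{2x/c}+1} = c\,h(t), \qquad t := x/c,
\]
where $h(t) = 2t/(e^{2t}+1)$. The claim then reduces to $\sup_{t \geq 0} h(t) \leq 0.28$. Since $h(0) = 0$, $h(t) \to 0$ as $t \to \infty$, and $h$ is smooth and positive on $(0,\infty)$, the supremum is attained at an interior critical point $t^*$ characterized by $h'(t^*) = 0$, which simplifies to $e^{2t^*}(2t^* - 1) = 1$. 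Substituting this stationarity condition back into $h$ yields the clean identity $h(t^*) = 2t^* - 1$, and the equation for $t^*$ can be written as $2t^* - 1 = W(e^{-1})$ via the Lambert $W$ function, whose value is $\approx 0.2785 < 0.28$.

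The main obstacle is that $t^*$ is transcendental, so the bound $0.28$ cannot be produced in fully elementary closed form. To avoid invoking $W$ explicitly, I would supplement the calculus step with a short rigorous enclosure: verify that the unique sign change of $e^{2t}(2t-1) - 1$ lies inside a small interval $[t_1, t_2] \subset [0.6, 0.7]$ by evaluating at the endpoints, and then use monotonicity of $h$ on $[0,t_1]$ and on $[t_2,\infty)$ together with the reduction $h(t^*) = 2t^* - 1 \leq 2t_2 - 1$ to conclude $h(t) \leq 0.28$ for every $t \geq 0$. This elementary enclosure is the only delicate piece; everything else is a one-line application of the hyperbolic identity and basic calculus.
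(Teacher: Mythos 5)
Your proof is correct and follows essentially the same route as the paper's: both arguments reduce to $x\ge 0$, scale out $c$ by setting $t=x/c$, and locate the unique interior critical point of the resulting one-variable error function. Indeed your stationarity equation $e^{2t^*}(2t^*-1)=1$ is algebraically the same as the paper's $t^*\left(\tanh(t^*)+1\right)=1$, since $\tanh(t)+1=2e^{2t}/(e^{2t}+1)$. Where you genuinely improve on the paper is the final step: the paper simply says ``by numerically solving'' the critical-point equation and reports $t^*=0.6392\ldots$ and $h(x^*)=0.2784645427\,c$, whereas your identity $h(t^*)=2t^*-1=W(e^{-1})$ combined with a sign-change enclosure makes that numerical claim rigorous. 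One detail to pin down: for the conclusion $2t_2-1\le 0.28$ you must take the right endpoint of the enclosure at $t_2\le 0.64$ (e.g.\ check $e^{1.28}\cdot 0.28>1$ while $e^{1.278}\cdot 0.278<1$, so $t^*\in(0.639,0.64)$); a generic bracket inside $[0.6,0.7]$ as written would only yield $2t_2-1\le 0.4$. Your sign-agreement argument for (\ref{neq2}) is equivalent to the paper's squaring argument, so that part is fine as well.
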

%%%%%%%%%%%%%%
\begin{proof}
The proof of (\ref{nmeq1}) is trivial for  $x=0$. Letting $h(x)=\left| x \right| -x\tanh \left( {\frac {x}{c}}\right)$ that, for $x>0$, becomes $h(x)=x-x\tanh \left( {\frac {x}{c}}\right).$  The maxima and minima of $h$ are those that annihilate
\begin{align*}
h^{\prime}(x)= \left( {\frac {x}{c}} \right) \left( \tanh\left(\frac{x}{c}\right) \right)^{2}-\tanh\left(\frac{x}{c}\right)+\left( 1-\frac {x}{c} \right).
\end{align*}
Setting $\frac{x}{c}=t$, we have
\begin{eqnarray*}
t\tanh^2 (t)-\tanh (t)+\left( 1-t \right)=0\end{eqnarray*}
which reduces to solve $s(t)=t( \tanh(t)+1)-1=0$. The function $s$ on $t\ge 0$ is strictly increasing, with $s(0)=-1$. Hence there exits only one zero. By numerically solving it, we find the value of $t^*=0.6392322714$ then $x^*=0.6392322714c$. When $x<0$, ans so $t<0$, $s(t)<-1$, showing that the value $t^*$ is the only extremal value of $h$. Hence,
\begin{eqnarray*}
h(x^*)=0.2784645427c \simeq 0.28c.
\end{eqnarray*}
To prove \eqref{neq2}, we have
%%%%%%%%%%%%%%
\begin{align*}
x\tanh \left( {\frac {x}{c}} \right) \leq \left| x \right| & \Longleftrightarrow x^{2}\tanh^{2} \left( {\frac {x}{c}} \right) \leq x^{2},\\
&\Longleftrightarrow \tanh^{2} \left( {\frac {x}{c}} \right) \leq 1.
\end{align*}
\end{proof}
\begin{theorem}
The approximation of $|x|$ by $x\tanh(\frac{x}{c})$ is more accurate than that with $\sqrt{x^2+c^2}$.
\end{theorem}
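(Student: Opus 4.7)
The plan is to read the statement as a pointwise comparison: prove that
$$\bigl||x|-x\tanh(x/c)\bigr| \;\le\; \bigl||x|-\sqrt{x^{2}+c^{2}}\bigr|$$
for every $x\in\mathbb{R}$ and $c>0$. Both error functions and $|x|$ are even in $x$, so it suffices to work on $x\ge 0$. On this half-line, Lemma~2 gives $x\tanh(x/c)\le x$ and Lemma~1 gives $\sqrt{x^{2}+c^{2}}\ge x$, which lets me remove the outer absolute values and rewrite the claim as $x-x\tanh(x/c)\le\sqrt{x^{2}+c^{2}}-x$, equivalently
$$x\bigl(2-\tanh(x/c)\bigr)\;\le\;\sqrt{x^{2}+c^{2}}.$$
Since $\tanh<1$, both sides are non-negative, so squaring is a reversible step.

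After squaring, subtracting $x^{2}$, and factoring $a^{2}-1=(a-1)(a+1)$ with $a=2-\tanh(x/c)$, the inequality becomes
$$x^{2}\bigl(1-\tanh(x/c)\bigr)\bigl(3-\tanh(x/c)\bigr)\;\le\;c^{2}.$$
Introducing the dimensionless ratio $t=x/c$ eliminates the shape parameter and reduces the whole theorem to a single scalar inequality,
$$\psi(t)\;:=\;t^{2}(1-\tanh t)(3-\tanh t)\;\le\;1\qquad\text{for all }t\ge 0.$$
Three easy observations corner the problem: $\psi(0)=0$; $\psi(t)\to 0$ as $t\to\infty$, because $1-\tanh t=2/(e^{2t}+1)$ decays exponentially while $t^{2}$ grows only polynomially; and $\psi$ is smooth and non-negative on $[0,\infty)$, hence its supremum is finite and attained at an interior critical point.

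The main obstacle is discharging this scalar bound rigorously rather than purely numerically (a quick scan shows $\psi_{\max}\approx 0.53$, comfortably below $1$). I would use a two-region argument. On a compact initial interval, say $[0,T]$ with $T$ around $2$, I would combine the monotonicity of $3-\tanh t$ with the elementary Taylor estimate for $\tanh t$ to dominate $\psi$ by an explicit polynomial that is checked to stay below $1$. On $[T,\infty)$, the explicit tail bound $1-\tanh t\le 2e^{-2t}$ together with $3-\tanh t\le 3$ gives $\psi(t)\le 6t^{2}e^{-2t}$, whose maximum on $[T,\infty)$ is small and trivially estimated. Gluing the two ranges finishes the pointwise proof. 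A shorter alternative, likely closer to the authors' intent, is simply to read the theorem in the $L^{\infty}$ sense: Lemma~1 shows the MQ error is bounded by $c$ and that bound is attained at $x=0$, while Lemma~2 shows the hyperbolic-tangent error is bounded by $0.28c$, so the maximum errors themselves already satisfy $0.28c<c$.
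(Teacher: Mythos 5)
Your proposal is correct in substance but takes a genuinely different route from the paper, and in fact proves a stronger statement. The paper's proof is a two-line manipulation: from $\cosh(x/c)>x/c$ it deduces $x^{2}\,\mathrm{sech}^{2}(x/c)<c^{2}$, i.e.
\begin{equation*}
x^{2}-\left(x\tanh(x/c)\right)^{2}\;<\;\left(x^{2}+c^{2}\right)-x^{2},
\end{equation*}
so it compares the gaps between the \emph{squares} of the approximants and $x^{2}$, not the approximation errors themselves. Your pointwise inequality $|x|-x\tanh(x/c)\le\sqrt{x^{2}+c^{2}}-|x|$ is strictly stronger: multiplying its left side by $|x|+x\tanh(x/c)$ and its right side by the larger factor $|x|+\sqrt{x^{2}+c^{2}}$ recovers the paper's difference-of-squares statement, whereas the converse implication fails precisely because the factor attached to the left-hand error is the smaller one. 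Your reduction to the scalar bound $\psi(t)=t^{2}(1-\tanh t)(3-\tanh t)\le 1$ is algebraically correct, and you can close it more cleanly than your two-region sketch: since $1-\tanh t=2/(e^{2t}+1)\le 2e^{-2t}$ and $3-\tanh t\le 3$ for all $t\ge 0$, one gets $\psi(t)\le 6t^{2}e^{-2t}\le 6e^{-2}\approx 0.81<1$ globally (the maximum of $t^{2}e^{-2t}$ on $[0,\infty)$ is $e^{-2}$, attained at $t=1$), so no case split is needed. Your fallback $L^{\infty}$ reading via the $0.28c$ versus $c$ bounds of the two preceding lemmas is also legitimate and is likely the informal sense intended by the authors, though it only compares worst-case errors. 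In short: the paper's argument is shorter but establishes a weaker comparison; yours costs one elementary exponential estimate and delivers genuine pointwise domination of the error.
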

\begin{proof}
It is clear that $$\cosh\left(\frac{x}{c}\right)>\frac{x}{c}.$$ Since $\cosh(x)$ is an even function we have $$\cosh^2\left(\frac{x}{c}\right)>\frac{x^2}{c^2},$$ then
$$x^2\mbox{sech}^2\left(\frac{x}{c}\right)<c^2,$$ which in turn gives $$x^2-x^2\tanh^2\left(\frac{x}{c}\right)<c^2.$$
Then
$$x^2-x^2\tanh^2\left(\frac{x}{c}\right)<c^2=\left(x^2+c^2\right)-x^2.$$
\end{proof}
Moreover, the function $x\tanh \left( {\frac {x}{c}}\right)$ converges to $\left| x \right|$ faster than $\sqrt{x^2+c^2}$ to $\left| x \right|$ by decreasing $c$, as stated in the next Theorem.
%%%%%%%%%%%%%%%%%%
\begin{theorem}
If $c\longrightarrow 0^{+}$ then $x\tanh \left( {\frac {x}{c}}\right)-\left| x \right|=o\left(\sqrt{x^{2}+c^{2}}-\left| x \right|\right) $.
\end{theorem}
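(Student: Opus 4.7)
The plan is to compute the asymptotic behavior of both the numerator and denominator of the ratio
\[
\frac{x\tanh(x/c)-|x|}{\sqrt{x^{2}+c^{2}}-|x|}
\]
as $c\to 0^{+}$, for fixed $x\neq 0$, and to show that it tends to $0$. Since both the function $x\tanh(x/c)$ and $\sqrt{x^{2}+c^{2}}$ are even in $x$, I would first reduce without loss of generality to the case $x>0$, which removes the absolute values and simplifies the bookkeeping.

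For the denominator, I would use the standard rationalization trick
\[
\sqrt{x^{2}+c^{2}}-x \;=\; \frac{c^{2}}{\sqrt{x^{2}+c^{2}}+x},
\]
which shows that, for fixed $x>0$, the denominator is asymptotically $\dfrac{c^{2}}{2x}$, i.e.\ it decays like a constant multiple of $c^{2}$.

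For the numerator, the key step is to rewrite $\tanh(t)-1=-\dfrac{2}{1+e^{2t}}$, giving
\[
x\tanh(x/c)-x \;=\; -\frac{2x}{1+e^{2x/c}},
\]
whose absolute value is asymptotically $2x\,e^{-2x/c}$ as $c\to 0^{+}$. Thus the numerator decays exponentially in $1/c$, while the denominator decays only polynomially. Forming the ratio I would obtain
\[
\frac{x\tanh(x/c)-x}{\sqrt{x^{2}+c^{2}}-x}
\;\sim\; -\frac{4x^{2}\,e^{-2x/c}}{c^{2}},
\]
and it remains only to note that $e^{-2x/c}/c^{2}\to 0$ as $c\to 0^{+}$, since any exponential with negative exponent in $1/c$ dominates any negative power of $c$. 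This yields the little-$o$ relation claimed.

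There is no real obstacle here; the whole argument is a pair of elementary asymptotic expansions. The only subtlety to state clearly is that the limit is taken pointwise in $x\neq 0$ (the case $x=0$ being trivial, since both sides vanish identically), and that the comparison relies on the elementary but decisive fact that exponential decay dominates any polynomial decay as $c\to 0^{+}$.
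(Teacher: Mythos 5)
Your argument is correct and takes the same route the paper intends: it reduces the claim to showing that the ratio $\bigl(x\tanh(x/c)-|x|\bigr)/\bigl(\sqrt{x^{2}+c^{2}}-|x|\bigr)$ tends to $0$ as $c\to 0^{+}$, which the paper merely asserts without any computation, whereas you supply the missing details via the rationalization $\sqrt{x^{2}+c^{2}}-x=c^{2}/(\sqrt{x^{2}+c^{2}}+x)\sim c^{2}/(2x)$ and the identity $\tanh t-1=-2/(1+e^{2t})$, so that the exponentially small numerator dominates the quadratically small denominator. The only slip is the remark that at $x=0$ ``both sides vanish identically'': there the denominator equals $c\neq 0$, not zero, but since the numerator is exactly $0$ the relation $0=o(c)$ holds trivially and the conclusion is unaffected.
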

%%%%%%%%%%%%%%%%%5
In fact,
\begin{eqnarray}
\lim_{c\longrightarrow 0^{+}}\dfrac{x\tanh \left( {\frac {x}{c}}\right)-\left| x \right|}{\sqrt{x^{2}+c^{2}}-\left| x \right|}=0
\end{eqnarray}

In order to illustrate the superiority of the new hyperbolic approximation to $|x|$,  $L_\infty$ error norm
$$\max_{1\leq i\leq n}|g(x_i)-|x_i||,$$
and the rate of convergence 
$$r_c=\frac{\log \left(\frac{E_{c_i}}{E_{c_{i-1}}}\right)}{\log \left(\frac{c_i}{c_{i-1}}\right)},$$
for both approximants $x\tanh\left(\frac{x}{c}\right)$ and $\sqrt{x^2+c^2}$ are reported in Table \ref{CRT1}, for $n=100, 200, 400$ equally spaced points 
in $[-10,10]$.
%where $E_{c_i}$ indicates the error of the approximant corresponding to the parameter $c_i$. 
Table \ref{CRT1} shows that $x\tanh\left(\frac{x}{c}\right)$ 
approximates $|x|$ much better than $\sqrt{x^2+c^2}$ while Table \ref{CRT1} and the logarithmic scale plots \ref{loglog} show that the approximant $x\tanh\left(\frac{x}{c}\right)$ has exponential rate of convergence to $|x|$ as $c\rightarrow 0$ instead of $O(c^2)$ provided by $\sqrt{x^2+c^2}$.
%%%%%%%%%%%%%%%
\begin{table}[H]
\centering
\textcolor{black}{
\caption{\footnotesize{$L_\infty$ errors and convergence rates for both approximants of $|x|$ for different values of $c$.}}\label{CRT1}
  \scriptsize{
\begin{tabular}{llclclclclcl}
  \hline\\[-2.5mm]
  \hline\\  
 &  &\multicolumn{2}{l}{$\left|  \left| x \right| -\sqrt{x^{2}+c^{2}} \right| $}&\multicolumn{2}{l}{ $  \left|  \left| x \right| -x\tanh \left( {\frac {x}{c}}\right) \right|$}\\
  \cmidrule(lr){3-4}\cmidrule(lr){5-6} 
  $n$ & $c$ & $L_{\infty}$ error &  $r_c$ & $L_{\infty} $ error& $r_c$ \\
 \hline\\
\multirow{4}{*}{100} & 0.1  &4.1127e-02  & --- & 2.3656e-02 & --- \\  
& 0.05  & 1.1698e-02  & 1.813823944 & 3.4922e-03 & 2.759998057 \\ 
& 0.025  & 3.0478e-03 & 1.940421754 & 6.2490e-05 & 5.804367034 \\ 
& 0.0125  & 7.7050e-04 & 1.983901373 & 1.9342e-08 & 11.65767264\\ 
& 0.00625  & 1.9317e-04 & 1.995923901 & 1.8457e-15 & 23.32106557\\ 
 \hline
 \multirow{4}{*}{\mbox{200}} & 0.1 & 6.1665e-02  & --- & 2.6930e-02 & --- \\  
 & 0.05 & 2.0637e-02  & 1.579218611 & 1.1875e-02 & 1.181286716 \\ 
& 0.025 & 5.8753e-03& 1.812498837& 1.7723e-03 & 2.744232777  \\ 
& 0.0125  & 1.5314e-03 & 1.939811357 & 3.2376e-05 & 5.774554268 \\ 
& 0.00625  & 3.8718e-04 & 1.983774825 & 1.0436e-08 & 11.59914019 \\ 
 \hline
  \multirow{4}{*}{400} & 0.1  & 7.8030e-02  & --- & 2.7348e-02 & --- \\  
& 0.05  & 3.0867e-02  & 1.337963627 & 1.3456e-02 & 1.023185719 \\
& 0.025  & 1.0337e-02 & 1.578247724 & 5.9488e-03 & 1.177579030\\ 
& 0.0125  & 2.9442e-03 & 1.811869965 & 8.9273e-04 & 2.736302862 \\ 
& 0.00625  & 7.6754e-04 & 1.939561834 & 1.6476e-05 & 5.759785971\\ 
 \hline
 %\multicolumn{5}{l}{Note: N = Center points, M = Collocation points} \\
 \end{tabular}
}
}
\end{table}
%%%%%%%%%%%%%%
\begin{figure}[H]
\centering
\begin{subfigure}[h]{0.32\textwidth}
\includegraphics[width=\textwidth]{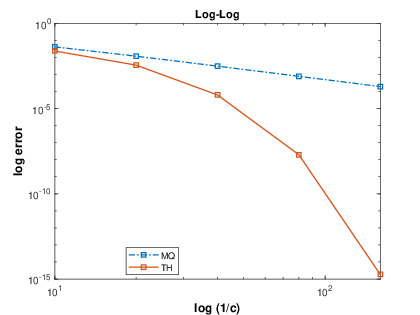}
\caption{}
\label{loglog100p}
\end{subfigure}
~ %add desired spacing between images, e. g. ~, \quad, \qquad, \hfill etc. 
%(or a blank line to force the subfigure onto a new line)
\begin{subfigure}[h]{0.32\textwidth}
\includegraphics[width=\textwidth]{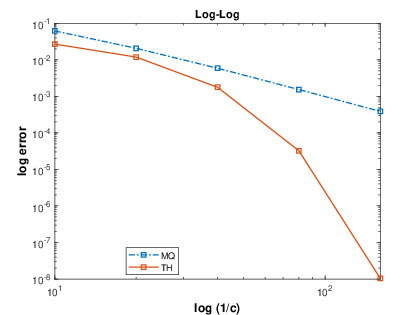}
\caption{}
\label{loglog200p}
\end{subfigure}
~ %add desired spacing between images, e. g. ~, \quad, \qquad, \hfill etc. 
%(or a blank line to force the subfigure onto a new line)
\begin{subfigure}[h]{0.32\textwidth}
\includegraphics[width=\textwidth]{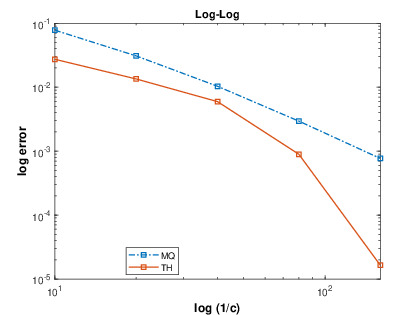}
\caption{}
\label{loglog400p}
\end{subfigure}
\caption{$\log|\mbox{error}|$ versus $\log\left(1/c\right)$ for $n=100$ (a), $n=200$ (b), and $n=400$ (c).}\label{loglog}
\end{figure}
%%%%%%%%%%%%%%%%%%%
\subsection{New transcendental RBF}
Let us introduce the following globally supported and infinitely differentiable transcendental RBF
\begin{eqnarray*}
\phi(r)=r\tanh\left({\frac{r}{c}}\right),
\end{eqnarray*}
abbreviated by RTH, where $r=\|x-x_j\|$ and $\|\cdot\|$ is the Euclidean norm $\mathbb{R}^d$.

The parameter $c>0$ is called {\it shape parameter} whose optimal value for getting accurate numerical solutions and good conditioning of the collocation matrix, can be found usually numerically.
%%%%%%%%%%%%%
\begin{theorem}
The  RTH RBF is conditionally negative definite of order $1$ on every $\mathbb{R}^d$.
\end{theorem}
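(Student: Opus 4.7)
The plan is to express $\phi(r)=r\tanh(r/c)$ as an absolutely convergent positive series of simpler radial kernels, each of which is itself conditionally negative definite of order $1$ on every $\mathbb{R}^d$, and then conclude by termwise addition. The key input is the Mittag–Leffler (partial fraction) expansion
$$\tanh(z)=\sum_{k=0}^{\infty}\frac{8z}{(2k+1)^2\pi^2+4z^2}.$$
Substituting $z=r/c$ and simplifying yields
$$\phi(r)=2c\sum_{k=0}^{\infty}\frac{r^2}{r^2+a_k^2},\qquad a_k:=\frac{(2k+1)\pi c}{2},$$
a series whose general term is $O(r^2/k^2)$ and which therefore converges absolutely and uniformly on every bounded interval in $r$.

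The heart of the argument is to verify that, for each fixed $a>0$, the radial kernel $\Psi_a(x):=\|x\|^2/(\|x\|^2+a^2)$ is conditionally negative definite of order $1$ on every $\mathbb{R}^d$. I would write
$$\Psi_a(x)=1-\frac{a^2}{\|x\|^2+a^2}$$
and invoke the Laplace-type identity
$$\frac{1}{\|x\|^2+a^2}=\int_{0}^{\infty}e^{-sa^2}\,e^{-s\|x\|^2}\,ds,$$
which displays the inverse quadric as a positive mixture of Gaussians; since each Gaussian is (strictly) positive definite on every $\mathbb{R}^d$, so is $(\|x\|^2+a^2)^{-1}$. For pairwise distinct $x_1,\dots,x_N$ and coefficients $\alpha\in\mathbb{R}^N$ with $\sum_i\alpha_i=0$, the constant term contributes $(\sum_i\alpha_i)^2=0$ and hence
$$\sum_{i,j=1}^{N}\alpha_i\alpha_j\,\Psi_a(x_i-x_j)=-\sum_{i,j=1}^{N}\alpha_i\alpha_j\,\frac{a^2}{\|x_i-x_j\|^2+a^2}\le 0.$$

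To conclude, I would interchange the finite $(i,j)$-quadratic form with the absolutely convergent series in $k$, obtaining
$$\sum_{i,j}\alpha_i\alpha_j\,\phi(\|x_i-x_j\|)=2c\sum_{k=0}^{\infty}\sum_{i,j}\alpha_i\alpha_j\,\Psi_{a_k}(x_i-x_j)\le 0.$$
The interchange is justified by the uniform termwise bound $\|x_i-x_j\|^2/a_k^2=O(k^{-2})$ and the finiteness of the $\alpha$-sum. I expect the main obstacle to be purely notational bookkeeping rather than any genuine analytic difficulty: once the Mittag–Leffler identity is in hand, the problem collapses to the classical positive definiteness of the inverse quadric $(\|x\|^2+a^2)^{-1}$, and closure of conditional negative definiteness of order $1$ under convergent positive series supplies the rest.
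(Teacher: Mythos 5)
Your proof is correct, but it follows a genuinely different route from the paper. The paper's argument is top-down: it sets $\psi(r)=-\phi(r)$, writes $\psi(r)=f(s)$ with $s=r^2$, asserts that $-f'(s)=\tfrac{1}{2}s^{-1/2}\tanh(\sqrt{s}/c)+\tfrac{1}{2c}\operatorname{sech}^2(\sqrt{s}/c)$ is completely monotone on $(0,\infty)$, and invokes Micchelli's theorem to conclude conditional positive definiteness of order $1$ of $-\phi$ on every $\mathbb{R}^d$. You instead work bottom-up: the Mittag--Leffler expansion $\tanh(z)=\sum_{k\ge 0}8z/\bigl((2k+1)^2\pi^2+4z^2\bigr)$ gives the explicit decomposition $\phi(r)=2c\sum_{k\ge 0}r^2/(r^2+a_k^2)$, each summand is $1$ minus an inverse quadric, the inverse quadric is a positive Gaussian mixture via the Laplace identity, and the constant is annihilated by the side condition $\sum_i\alpha_i=0$; termwise summation (justified by the $O(k^{-2})$ bound) finishes the argument. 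Your version is more self-contained and in fact supplies precisely the detail the paper leaves unverified: the complete monotonicity of $-f'$ is, after the substitution $s=r^2$, equivalent to the termwise differentiation of your series, since $\frac{d}{ds}\bigl[\sqrt{s}\tanh(\sqrt{s}/c)\bigr]=\sum_k 2c\,a_k^2/(s+a_k^2)^2$ is manifestly completely monotone. The paper's route is shorter once Micchelli's theorem is taken as a black box and immediately yields the eigenvalue/invertibility consequences recorded in the subsequent remark; yours is more elementary and constructive. One small refinement: to get conditional negative \emph{definiteness} (strict inequality for $\alpha\neq 0$ with $\sum_i\alpha_i=0$) rather than semi-definiteness, upgrade your final $\le$ to $<$ by noting that each inverse quadric is \emph{strictly} positive definite (its Gaussian mixture has a nondegenerate positive density), so already the $k=0$ term of your series is strictly negative.
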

\begin{proof}
We show that $\psi(r)=-\phi(r)$ is conditionally positive definite of order $1$.  We have $\psi(r)=f(s)=-\sqrt{s}\tanh\left(\frac{\sqrt{s}}{c}\right)$, where $s=r^2.$
Now for $$g(s)=-f'(s)=\frac{1}{2}s^{-\frac{1}{2}}\tanh\left(\frac{\sqrt{s}}{c}\right)+\frac{1}{2c}\left(1-\tanh^2\left(\frac{\sqrt{s}}{c}\right)\right),$$
we have
$$(-1)^{l}g^{(l)}(s)\geq 0,\quad \mbox{for}~\mbox{all}~l\in\mathbb{N}_0~\mbox{and}~\mbox{all}~s>0.$$
So $-f'(s)$ is completely monotone on $(0,\infty)$. Now, since $f\not\in\Pi_m^d$ , the claim is proved according to Micchelli's theorem \cite{5high}.
\end{proof}
\begin{remark}
Since $\phi$ is  conditionally negative definite of order $1$ and $\phi(0)=0$, then the matrix $A=\left[\phi(\|x_i-x_j\|)\right]_{1\leq i,j\leq n}$ has one positive and $n-1$ negative eigenvalues and in particular it is invertible.
\end{remark}
%%%%%%%%%%%%%
In the sequel,  we consider $d=1$, since our work is confined to the univariate case.
We have seen before that the RTH RBF is an smooth approximant to $\tau(r)=r$ with higher accuracy and better convergence properties than the MQ RBF $\sqrt{r^2+c^2},$ by decreasing shape parameter $c$. In Figure \ref{SC1}, we have plotted both RTH basis
\begin{eqnarray}\label{htphi}
\phi_j(x)=\left(x-x_j\right)\tanh\left(\frac{x-x_j}{c}\right),
\end{eqnarray}
and MQ basis (\ref{meq0}) centered  at $x_j=0$. It can be noted from Figure \ref{SC1} that the RTH RBF approaches to $|x|$ faster than the MQ RBF, even with larger shape parameters. Moreover,  in RTH RBF $\phi_j(x_j)=0$ independent of the value of $c$, but MQ requires that $c=0$. This property of the RTH RBF leads to getting more accurate results in corresponding quasi-interpolants. 
%%%%%%%%%%%%%5
\begin{figure}[H]
\centering
\begin{subfigure}[h]{0.48\textwidth}
\includegraphics[width=\textwidth]{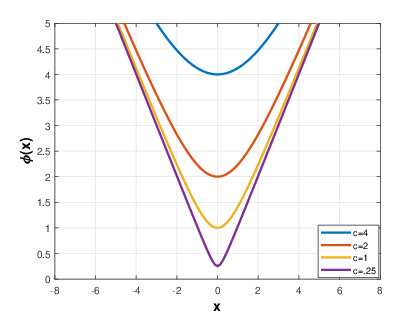}
%\includegraphics[width=\textwidth]{tanh1-2}
%\caption{$\phi(r)=ln(\dfrac{r^{2}+a}{r^{2}+b})$ for $b=1, a=2, 4, 8$.}
%\label{fig456}
\end{subfigure}
~ %add desired spacing between images, e. g. ~, \quad, \qquad, \hfill etc. 
 %(or a blank line to force the subfigure onto a new line)
\begin{subfigure}[h]{0.48\textwidth}
\includegraphics[width=\textwidth]{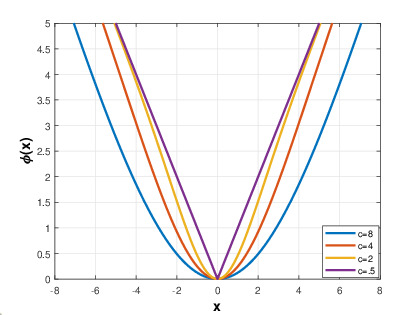}
%\caption{$\phi(\frac{r}{c})=ln(\dfrac{(\frac{r}{c})^{2}+4}{(\frac{r}{c})^{2}+1})$ for different shape parameters.}
%\label{fig4566}
\end{subfigure}
%add desired spacing between images, e. g. ~, \quad, \qquad, \hfill etc. 
%(or a blank line to force the subfigure onto a new line)
\caption{{\footnotesize{Plots of  Multiquadric RBF (left), and
RTH RBF (right) for different values of shape parameter
$c$.}}}\label{SC1}
\end{figure}
%%%%%%%%%%%%%%%%%%%
The first and second derivatives of the RTH RBF (\ref{htphi}) are of the form
\begin{align}
\phi^{'}_j(x)&=\tanh \left( {\frac {x-x_j}{c}} \right) +{\frac {(x-x_j)}{c} \left( 1- \left( \tanh \left( {\frac {x-x_j}{c}} \right)  \right) ^{2} \right) },\notag\\
\phi^{''}_j(x)&=2\,{\frac {1}{c} \left( 1- \left( \tanh \left( {\frac {x-x_j}{c}} \right) \right) ^{2} \right) }-2\,{\frac {(x-x_j)}{{c}^{2}}\tanh \left( {\frac {x-x_j}{ c}} \right)  \left( 1- \left( \tanh \left( {\frac {x-x_j}{c}} \right) \right) ^{2} \right)}\notag,
\end{align}
and are plotted in Figure \ref{fst1} for $c=1$.
%%%%%%%%%%%%%%%%%%%%
\begin{figure}[H]
\centering
\begin{subfigure}[h]{0.31\textwidth}
\includegraphics[width=\textwidth]{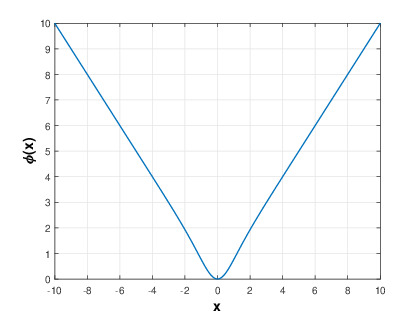}
\caption{}
\label{fig:Slope field}
\end{subfigure}
~ %add desired spacing between images, e. g. ~, \quad, \qquad, \hfill etc. 
%(or a blank line to force the subfigure onto a new line)
\begin{subfigure}[h]{0.31\textwidth}
\includegraphics[width=\textwidth]{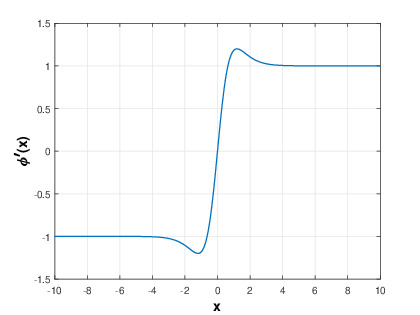}
 \caption{}
%\label{fig:tiger}
\end{subfigure}
~ %add desired spacing between images, e. g. ~, \quad, \qquad, \hfill etc. 
%(or a blank line to force the subfigure onto a new line)
\begin{subfigure}[h]{0.31\textwidth}
\includegraphics[width=\textwidth]{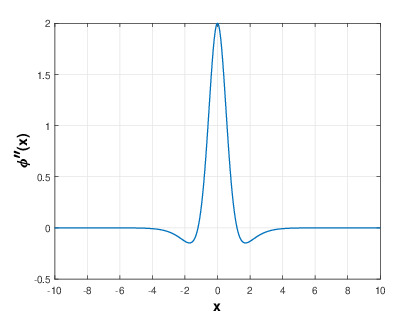}
\caption{}
%\label{fig:mouse}
\end{subfigure}
\caption{RTH RBF (a), first derivative (b), and second derivative of RTH RBF (c) with shape parameter $c=1$}\label{fst1}
\end{figure}
In Tables \ref{RBF} and \ref{RBF2}, we summarized the properties of both MQ and RTH RBFs, where $\xi=1.199678640,$ is obtained numerically by calculating the roots of the second derivative.
%%%%%%%%%%%%%%%%%%
\begin{table}[H]
  \centering
   \caption{\footnotesize{Comparing both RBFs.}}\label{RBF}
  \scriptsize{
\begin{tabular}{|l|l|l|l|l|l|}
\hline
 & & & & &\\
\mbox{Name} & $\phi_{j}(x)$ & $ \displaystyle \lim_{x\rightarrow x_{j}}\phi_{j}(x) $ & $\displaystyle \lim_{c\rightarrow 0}\phi_{j}(x)$& $\displaystyle \lim_{x\rightarrow \pm \infty} \phi^{'}_{j}(x)$ & \mbox{condition} \\
 & & & & &\\
\hline
 & & & & &\\
\mbox{MQ RBF } & $\sqrt{c^{2}+(x-x_{j})^{2}}$ & $c$&$\lvert x-x_{j} \rvert$& $\pm 1$  & $x\in (-\infty,\infty)$\\
 & & & & &\\
 \hline
 & & & & &\\
\mbox{RTH RBF} & $\left(x-x_{j} \right)\tanh\left(\dfrac{ x-x_{j}}{c}\right)$ & $0$&$\lvert x-x_{j}\rvert$& $\pm 1$ & $x\in (-\infty,\infty)$ \\
 & & & & &\\
\hline
\end{tabular}
}
\end{table}
%%%%%%%%%%%%%%%%%%%%%%
\begin{table}[H]
  \centering
   \caption{\footnotesize{Comparing both RBFs.}}\label{RBF2}
  \scriptsize{
\begin{tabular}{|l|l|l|l|l|}
\hline
 & & & &\\
\mbox{Name} & $\phi_{j}(x)$ & $\phi^{'}_{j}(x)$ & $\phi^{''}_{j}(x)$ & \mbox{condition} \\
 & & & &\\
\hline
 & & & &\\
\mbox{MQ RBF} & $\sqrt{c^{2}+(x-x_{j})^{2}}$ & Strictly increasing &$\geq 0$  & $x\in (-\infty,\infty)$\\
 & & & &\\
 \hline
 & & & &\\
\mbox{RTH RBF} & $\left(x-x_{j} \right)\tanh\left(\dfrac{ x-x_{j}}{c}\right)$& Strictly increasing &  $\geq 0$ &$x\in [-c \xi,c \xi] $ \\
 & &  & &\\
\hline
\end{tabular}
}
\end{table}
%%%%%%%%%%%%%%
\subsection{Quasi-interpolation operator}
The quasi-interpolation operator of a function $f:[a, b]\rightarrow\mathbb{R}$ with RTH RBF on the scattered points 
\begin{eqnarray}\label{points}
a=x_{0}<x_{2}<\dotsb< x_{n} = b \qquad h := \max_{2\leq j\leq n} (x_{j} - x_{j-1}),
\end{eqnarray}
has the form 
\begin{equation}\label{meq04}
(\mathcal{L} _{RTH} f)(x)=f_{0}\alpha_{0}(x)+f_{1}\alpha_{1}(x)+\sum_{j=2}^{n-2}f_{j}\psi_{j}(x)+f_{n-1}\alpha_{n-1}(x)+f_{n}\alpha_{n}(x)
\end{equation}
where
\begin{align}
\alpha_{0}(x)&=\dfrac{1}{2}+\dfrac{\phi_{1}(x)-(x-x_{0})}{2(x_{1}-x_{0})}, \notag \\
\alpha_{1}(x)&=\dfrac{\phi_{2}(x)-\phi_{1}(x)}{2(x_{2}-x_{1})}-\dfrac{\phi_{1}(x)-(x-x_{0})}{2(x_{1}-x_{0})}, \notag\\
\alpha_{n-1}(x)&=\dfrac{(x_{n}-x)-\phi_{n-1}(x)}{2(x_{n}-x_{n-1)}}-\dfrac{\phi_{n-1}(x)-\phi_{n-2}(x)}{2(x_{n-1}-x_{n-2})}, \notag\\
\alpha_{n}(x)&=\dfrac{1}{2}+\dfrac{\phi_{n-1}(x)-(x_{n}-x)}{2(x_{n}-x_{n-1})}, \notag \\
\phi_j(x)&=\left(x-x_j\right)\tanh\left(\frac{x-x_j}{c}\right),\quad j=1,\ldots, n-1, ~c\in \mathbb{R}_+,\notag\\
\psi_{j}(x)&=\dfrac{\phi_{j+1}(x)-\phi_{j}(x)}{2(x_{j+1}-x_{j})}-\dfrac{\phi_{j}(x)-\phi_{j-1}(x)}{2(x_{j}-x_{j-1)}},\qquad 2\leq j \leq n-2.\notag
\end{align}
The formula (\ref{meq04}) can be rewritten as
\begin{eqnarray}\label{divform}
\left(\mathcal{L} _{RTH} f\right)(x)&=&\frac{1}{2}\sum_{j=1}^{n-1}f[x_{j-1},x_j,x_{j+1}](x_{j+1}-x_{j-1})\phi_j(x)+\\ \nonumber
& &\frac{f_0+f_n}{2}+\frac{1}{2}f[x_0,x_1](x-x_0)-\frac{1}{2}f[x_{n-1},x_n](x_n-x).
\end{eqnarray}
Let $\phi_{-1}(x)=|x-x_{-1}|,~\phi_{0}(x)=|x-x_{0}|,~\phi_{n}(x)=|x-x_{n}|$ and $\phi_{n+1}(x)=|x-x_{n+1}|$, then for $x\in [x_0,x_n]$, the operator 
$\mathcal{L} _{RTH}$ can be rearranged as
\begin{eqnarray}\label{meqpsi}
(\mathcal{L} _{RTH} f)(x)=\sum_{j=0}^nf_j\psi_j(x),
\end{eqnarray}
where 
\begin{eqnarray*}
\psi_j(x)=\frac{\phi_{j+1}(x)-\phi_j(x)}{2(x_{j+1}-x_j)}-\frac{\phi_{j}(x)-\phi_{j-1}(x)}{2(x_{j}-x_{j-1})},\quad j=0,\ldots,n,
\end{eqnarray*}
and $x_{-1}<x_0,~x_{n+1}>x_n.$
%%%%%%%%%%%%%%
%%%%%%%%%%%%%
\begin{remark}\label{rem}
From relation (\ref{divform}), it is clear that the quasi-interpolation operator $\mathcal{L}_{RTH}$ reproduces the linear polynomials on $[x_0,x_n]$, that is
\begin{eqnarray}\label{linrep}
\sum_{j=0}^n(ax_j+b)\psi_j(x)=ax+b,\quad a,b\in\mathbb{R}\,,
\end{eqnarray}
from which we also get $\displaystyle \sum_{j=0}^n \psi_j(x)=1$ at any point $x \in [x_0,x_n]$.
\end{remark}
%%%%%%%%%%%%%%
In order to prove the shape-preserving property of the quasi-interpolation operator (\ref{meq04}), we give some important definitions and theorems from differential geometry (cf. e.g. \cite{o1997elementary}).
%%%%%%%%
\begin{definition}
A differentiable plane curve $\alpha:(a,b)\rightarrow\mathbb{R}^2$ is said to be regular if its derivative never vanishes. That is
\begin{eqnarray*}
\forall t \in (a,b), \quad \quad \quad \quad
\alpha^\prime(t)=\left( \frac{d \alpha_{1}}{dt},\frac{d \alpha_{2}}{dt}\right)\neq (0,0).
\end{eqnarray*}
\end{definition}
%%%%%%%%%%%%%%%
\begin{theorem}
Let $C$ be a regular plane curve given by  $\alpha(t)$. Then the curvature $\kappa$ of $C$ at  $t$ is given by
$$\kappa[\alpha](t)=\left \| \alpha^\prime(t) \times \alpha^{\prime \prime}(t) \right \|/\left \| \alpha^\prime(t) \right \|^{3}.$$
\end{theorem}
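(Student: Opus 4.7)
The plan is to reduce the statement to its arclength form, where the curvature is by definition the norm of the second derivative, and then use the chain rule to transfer the identity to the general parameter $t$.

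First I would introduce the arclength function $s(t)=\int_{a}^{t}\|\alpha'(u)\|\,du$, noting that the regularity hypothesis $\alpha'(t)\neq(0,0)$ guarantees $s'(t)=v(t):=\|\alpha'(t)\|>0$, so that $s$ is a strictly increasing diffeomorphism and the reparametrization $\beta(s)=\alpha(t(s))$ is well-defined and of class $C^{2}$. By construction $\|\beta'(s)\|=1$, so $T(s):=\beta'(s)$ is the unit tangent, and the curvature of $C$ is taken to be $\kappa=\|\beta''(s)\|=\|T'(s)\|$ (this is the intrinsic definition from which I will work).

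Next I would apply the chain rule twice to write derivatives of $\alpha$ in the Frenet basis $\{T,N\}$. One gets
\begin{equation*}
\alpha'(t)=v(t)\,T(s),\qquad \alpha''(t)=v'(t)\,T(s)+v(t)^{2}\,T'(s),
\end{equation*}
and since $\|T\|=1$ implies $T\perp T'$, one can write $T'(s)=\kappa(s)\,N(s)$ with $N\perp T$ a unit vector. The key step is then to compute the (scalar) cross product in the plane: because $T\times T=0$ and $T\times N$ has unit length,
\begin{equation*}
\alpha'(t)\times\alpha''(t)=v(t)\,T\times\bigl(v'(t)\,T+v(t)^{2}\kappa\,N\bigr)=v(t)^{3}\,\kappa\,(T\times N).
\end{equation*}
Taking norms and using $\|T\times N\|=1$ and $v(t)=\|\alpha'(t)\|$ yields $\|\alpha'(t)\times\alpha''(t)\|=\kappa\,\|\alpha'(t)\|^{3}$, from which the claimed formula follows by dividing.

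The main obstacle is essentially bookkeeping rather than a conceptual difficulty: one must be careful that the ``cross product'' of two planar vectors is interpreted consistently (either as the scalar $u_{1}v_{2}-u_{2}v_{1}$, or as the $z$-component of the three-dimensional cross product after embedding $\mathbb{R}^{2}\hookrightarrow\mathbb{R}^{3}$), and that the chosen definition of $\kappa$ matches the one used in the rest of the paper. Once the arclength reparametrization is set up and the Frenet decomposition of $\alpha''$ is in place, the algebra is straightforward and the orthonormality of $\{T,N\}$ gives the final identity in one line.
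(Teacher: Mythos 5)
Your proof is correct: the arclength reparametrization, the Frenet decomposition $\alpha'=vT$, $\alpha''=v'T+v^{2}\kappa N$, and the orthonormality of $\{T,N\}$ give exactly the claimed identity. Note, however, that the paper does not prove this theorem at all — it is quoted as a background result from elementary differential geometry (with a citation) and used only to motivate the curvature comparison in the shape-preservation argument, so there is no in-paper proof to compare against. The only cosmetic gap in your write-up is at points where $\kappa=0$: there $T'=0$ and the unit normal $N=T'/\|T'\|$ is not defined by your construction, but both sides of the identity vanish, so the formula holds trivially and the argument is complete.
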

%%%%%%%%
\begin{definition}\label{cur1}
Let $f\in C^{2}[a,b]$. The curvature of the plane curve $y=f(x)$ is given by
$$\kappa(x)=\dfrac{\lvert  f^{\prime \prime}(x)\rvert}{(1+(f^{\prime}(x))^{2})^{\frac{3}{2}}}.$$
\end{definition}
%%%%%%%%%
\begin{theorem}[Fundamental theorem of plane curves]
Let $\alpha, \gamma:(a,b)\rightarrow\mathbb{R}^2$ be regular plane curves such that $\kappa[\alpha](t)=\kappa[\gamma](t)$ for all $t\in(a,b)$. Then there is an orientation-preserving Euclidean motion $F:\mathbb{R}^2\rightarrow\mathbb{R}^2$ such that $\gamma=F~o~\alpha$.
\end{theorem}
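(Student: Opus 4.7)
The plan is to reduce the problem to the observation that the signed curvature determines the tangent direction up to a constant rotation, and then integrate. Throughout, I will interpret the statement in the standard way: both curves are sufficiently smooth, both are parametrized by arc length (or, if not, one first reparametrizes by arc length, using the fact that regularity guarantees this is possible), and $\kappa[\alpha]=\kappa[\gamma]$ is understood as equality of the signed curvature, so that orientation is preserved.

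First, I would set up the Frenet-type data for a plane curve. For a unit-speed curve $\alpha$, write $\alpha'(s)=(\cos\theta_\alpha(s),\sin\theta_\alpha(s))$ where $\theta_\alpha$ is a smooth lift of the tangent angle, obtained by integrating the angular velocity of $\alpha'$ starting from any continuous choice at one point. A direct differentiation then gives $\alpha''(s)=\theta_\alpha'(s)(-\sin\theta_\alpha(s),\cos\theta_\alpha(s))$, which together with the formula
\begin{equation*}
\kappa[\alpha](s)=\frac{\|\alpha'(s)\times\alpha''(s)\|}{\|\alpha'(s)\|^{3}}
\end{equation*}
stated in the earlier theorem yields $\kappa[\alpha](s)=|\theta_\alpha'(s)|$, and in the signed version $\theta_\alpha'(s)=\kappa[\alpha](s)$. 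The analogous identity holds for $\gamma$.

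Next, since $\kappa[\alpha](s)=\kappa[\gamma](s)$ for every $s\in(a,b)$, the functions $\theta_\alpha$ and $\theta_\gamma$ differ by a constant, say $\theta_\gamma(s)=\theta_\alpha(s)+\theta_{0}$. Writing $R_{\theta_{0}}$ for the rotation of $\mathbb{R}^2$ by angle $\theta_{0}$, this identity translates directly into $\gamma'(s)=R_{\theta_{0}}\alpha'(s)$. Integrating on $(a,b)$ produces a constant vector $\mathbf{v}\in\mathbb{R}^{2}$ with $\gamma(s)=R_{\theta_{0}}\alpha(s)+\mathbf{v}$. Defining $F(x):=R_{\theta_{0}}x+\mathbf{v}$ gives an orientation-preserving Euclidean motion with $\gamma=F\circ\alpha$, which is exactly the required conclusion.

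The main obstacle is not any single computation but the interpretive step at the very beginning: the theorem as stated compares unsigned curvatures of two arbitrarily parametrized regular curves, and this hypothesis alone is too weak (curves with the same unsigned curvature can differ by a reflection, and curves with different speed functions cannot coincide pointwise after a rigid motion). I would therefore be careful, in writing the full proof, to state explicitly that we work with the arc-length parametrization and with the signed curvature $\theta'$; once those two reductions are in place, the rest of the argument is just the elementary Frenet-angle manipulation outlined above, and the uniqueness of solutions to the ODE $\theta'=\kappa$ (up to an additive constant) is what powers the conclusion.
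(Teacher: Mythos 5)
The paper does not prove this statement at all: it is imported verbatim as a classical result from differential geometry (with a citation to an elementary textbook) and is used only to justify the corollary about unit-speed curves, so there is no in-paper argument to compare yours against. Your proof is the standard turning-angle argument and it is correct for the properly formulated version of the theorem: for a unit-speed curve write $\alpha'(s)=(\cos\theta_\alpha(s),\sin\theta_\alpha(s))$, observe that the signed curvature equals $\theta_\alpha'$, conclude from $\theta_\alpha'=\theta_\gamma'$ that the tangent angles differ by a constant $\theta_0$, and integrate $\gamma'=R_{\theta_0}\alpha'$ to get $\gamma=R_{\theta_0}\alpha+\mathbf{v}$. Your interpretive caveats are not pedantry but genuinely necessary: the curvature defined in the paper's preceding theorem is the unsigned quantity $\|\alpha'\times\alpha''\|/\|\alpha'\|^3$, and with unsigned curvature the conclusion can fail (a curve and a partial mirror image of it across a point where $\kappa=0$ share the same $|\kappa|$ but are not congruent by an orientation-preserving motion); likewise, without matching speed functions the pointwise identity $\gamma=F\circ\alpha$ cannot hold (compare $t\mapsto(t,0)$ and $t\mapsto(2t,0)$, both of zero curvature). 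So the statement as printed in the paper is true only under the unit-speed, signed-curvature reading that you supply, which is also the reading the paper's own Corollary implicitly adopts. The one small point worth tightening in a final write-up is the existence and smoothness of the lift $\theta_\alpha$ (a short covering-space or local-integration argument), which you gesture at but do not spell out.
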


%%%%%%%%%%%
\begin{corollary}\label{fund}
Two unit-speed plane curves which have the same curvature differ only by a Euclidean motion.
\end{corollary}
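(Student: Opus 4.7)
The plan is to observe that this corollary is a direct specialization of the preceding Fundamental Theorem of Plane Curves, so the proof amounts to verifying that the hypotheses of that theorem are automatically met by unit-speed curves.

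First, I would note that if $\alpha,\gamma:(a,b)\to\mathbb{R}^2$ are unit-speed, then by definition $\|\alpha'(t)\|=\|\gamma'(t)\|=1$ for every $t\in(a,b)$. In particular $\alpha'(t)\neq (0,0)$ and $\gamma'(t)\neq(0,0)$, so both curves are regular in the sense of the preceding definition, and the expression for the curvature $\kappa[\cdot](t)=\|\alpha'(t)\times\alpha''(t)\|/\|\alpha'(t)\|^{3}$ is defined on all of $(a,b)$.

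Second, since $\kappa[\alpha](t)=\kappa[\gamma](t)$ on $(a,b)$ by hypothesis, the two regular curves $\alpha$ and $\gamma$ satisfy exactly the assumptions of the Fundamental Theorem of Plane Curves stated just above. Applying that theorem produces an orientation-preserving Euclidean motion $F:\mathbb{R}^2\to\mathbb{R}^2$ with $\gamma=F\circ\alpha$. An orientation-preserving Euclidean motion is in particular a Euclidean motion, so $\alpha$ and $\gamma$ differ only by a Euclidean motion, which is the stated conclusion.

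There is essentially no genuine obstacle here: the corollary is the unit-speed specialization of the Fundamental Theorem, and the only thing to check is that the unit-speed condition forces regularity. The only point that might merit a sentence of justification is noting that unit speed is a stronger hypothesis than regularity, so no further parametrization step (such as reparametrizing by arc length) is needed before invoking the Fundamental Theorem.
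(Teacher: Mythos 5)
Your proposal is correct and matches the paper's intent: the paper states this corollary without proof as an immediate specialization of the Fundamental Theorem of Plane Curves, and your observation that unit speed implies regularity (so the theorem applies directly) is exactly the one-line justification required.
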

%%%%%%%%%%%%%
\begin{theorem}
The quasi-interpolation operator $\mathcal{L} _{RTH}$ constructed by data points $\{(x_j,f_j)\}$, is monotonicity and convexity-preserving for $c$ small enough.
\end{theorem}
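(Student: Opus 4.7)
My plan is to differentiate the divided-difference representation~(\ref{divform}) and then analyze the signs of $(\mathcal{L}_{RTH}f)'$ and $(\mathcal{L}_{RTH}f)''$ using the closed-form expressions for $\phi_j'$ and $\phi_j''$ given in the preceding subsection.

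Differentiating (\ref{divform}) twice kills the constant and the two affine boundary pieces, leaving
$$(\mathcal{L}_{RTH}f)''(x) \;=\; \tfrac12\sum_{j=1}^{n-1} f[x_{j-1},x_j,x_{j+1}]\,(x_{j+1}-x_{j-1})\,\phi_j''(x).$$
Under the convexity hypothesis each scalar coefficient in the sum is nonnegative, so the sign question reduces to a weighted sum of $\phi_j''$. From the closed form $\phi_j''(x)=\tfrac{2}{c}\mbox{sech}^2(t_j)\bigl(1-t_j\tanh t_j\bigr)$ with $t_j=(x-x_j)/c$ (cf.\ Table~\ref{RBF2}), $\phi_j''(x)\ge 0$ exactly when $|t_j|\le \xi$ and, outside this ``core'', $\phi_j''(x)$ is negative but decays like $|t_j|\,e^{-2|t_j|}$. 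The strategy is to show that, for $c$ in the claimed range, the positive near-node contributions strictly dominate the exponentially small far-node contributions, yielding $(\mathcal{L}_{RTH}f)''(x)\ge 0$ on $[x_0,x_n]$.

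For the monotonicity claim I would differentiate (\ref{divform}) only once and regroup. Using $f[x_{j-1},x_j,x_{j+1}](x_{j+1}-x_{j-1})=d_j-d_{j-1}$ with $d_j:=f[x_j,x_{j+1}]\ge 0$ (the hypothesis) and applying Abel summation recasts $(\mathcal{L}_{RTH}f)'(x)$ as a weighted combination of the nonnegative differences $d_j$, whose weights are of the form $\tfrac12\bigl(\phi_j'(x)-\phi_{j+1}'(x)\bigr)$ plus a simple boundary piece. Since $\phi_j'(x)\to \mbox{sign}(x-x_j)$ uniformly on compact sets avoiding the nodes as $c\to 0^+$, these weights are nonnegative for $c$ small, and $(\mathcal{L}_{RTH}f)'(x)\ge 0$ follows.

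The main obstacle is the convexity step. Because $\phi_j''$ is not sign-definite, a termwise positivity argument is unavailable, and one must verify quantitatively that the positive ``core'' contribution strictly outweighs the negative tails, uniformly in $x\in[x_0,x_n]$—in particular at points maximally distant from all nodes. Carrying this out requires sharp two-sided control of $G(t)=\mbox{sech}^2(t)(1-t\tanh t)$ both on and off its positivity interval, combined with an explicit relation between $c$ and the grid spacing $h$ (morally $c\xi$ comparable to $h$, so that every $x\in[x_0,x_n]$ lies in the core of some nearby node) under which the claim ``$c$ small enough'' becomes precise.
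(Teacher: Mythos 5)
Your route is genuinely different from the paper's. The paper never analyzes the signs of $(\mathcal{L}_{RTH}f)'$ and $(\mathcal{L}_{RTH}f)''$ at all: it compares the curvature of the graph of $\mathcal{L}_{RTH}f$ with that of $\mathcal{L}_{MQ}f$, shows that both curvatures tend to $0$ as $c\to 0$ at every $x\neq x_j$, and then invokes the fundamental theorem of plane curves (Corollary \ref{fund}) to argue that the two quasi-interpolants differ only by a Euclidean motion in the limit, so that $\mathcal{L}_{RTH}$ inherits the shape preservation Wu and Schaback proved for $\mathcal{L}_{MQ}$. Your plan is instead the direct Wu--Schaback-style sign analysis of the divided-difference form (\ref{divform}), and it correctly isolates the real difficulty, namely that $\phi_j''$ changes sign at $|x-x_j|=c\xi$, which has no analogue for the multiquadric.

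However, both halves of your plan have genuine gaps. For monotonicity, the weights produced by Abel summation are $\tfrac12\bigl(1-\phi_1'(x)\bigr)$, $\tfrac12\bigl(\phi_j'(x)-\phi_{j+1}'(x)\bigr)$ and $\tfrac12\bigl(1+\phi_{n-1}'(x)\bigr)$, and you argue they are nonnegative for small $c$ because $\phi_j'\to\mathrm{sign}(x-x_j)$ away from the nodes. But the sign condition must hold on all of $[x_0,x_n]$, including points at distance $O(c)$ from a node, and there $\phi_j'$ overshoots: its maximum is $\tanh\xi+\xi\,\mbox{sech}^2\xi=\xi\approx 1.1997>1$, attained at $x=x_j+c\xi$. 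Taking $d_j=0$ for every $j$ except one interior index $j_0$, the derivative of the quasi-interpolant at $x=x_{j_0+1}+c\xi$ is approximately $\tfrac12(1-\xi)<0$ for every small $c>0$, so the weights are not nonnegative and ``$c$ small enough'' does not repair this. For convexity, the core-versus-tail dominance cannot be made uniform over admissible data: if the second divided differences vanish at every node except $j_0$ (e.g.\ $f_j=|x_j-x_{j_0}|$), the sum in your expression for $(\mathcal{L}_{RTH}f)''$ collapses to a single term $C\,\phi_{j_0}''(x)$ with $C>0$, which is negative for $|x-x_{j_0}|>c\xi$ no matter how small $c$ is; there are no positive near-node contributions available to dominate. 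So the pointwise sign conditions in the definitions of monotonicity- and convexity-preservation cannot be established along this route (the violations are exponentially small in $|x-x_j|/c$ but do not vanish); any correct statement must weaken those definitions, restrict the data, or pass to a limiting/comparison argument of the kind the paper attempts.
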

\begin{proof}
According to the Corollary \ref{fund}, it suffices to show that
\begin{eqnarray*}
\displaystyle \lim_{c\rightarrow 0}\lvert \kappa_{\mathcal{L} _{MQ}}(x)-\kappa_{\mathcal{L} _{RTH}}(x) \rvert=0.
\end{eqnarray*}
Let $x\neq x_j$, otherwise both quasi-interpolants (\ref{meq2}) and (\ref{meq04}) do not have first and second derivatives as $c$ approaches $0$.
Now, according to definition \ref{cur1}, we have
\begin{eqnarray*}
\kappa_{{\mathcal{L} _{MQ}}}(x)=\dfrac{\lvert  ({\mathcal{L} _{MQ}}f)^{\prime \prime}(x)\rvert}{\left(1+\left(({\mathcal{L} _{MQ}}f)^{\prime}(x)\right)^{2}\right)^{\frac{3}{2}}}.
\end{eqnarray*}
Since for MQ RBF,
\begin{eqnarray*}
\phi_{j}^{\prime \prime}(x)=\frac{c^2}{\left(c^2+(x-x_j)^2\right)^{3/2}},
\end{eqnarray*}
then
\begin{eqnarray*}
\lim_{c\longrightarrow 0} \phi_{j}^{\prime \prime}(x)=0.
\end{eqnarray*}
Moreover
\begin{eqnarray*}
(\mathcal{L}_{ MQ}f)^{\prime \prime}(x)=\dfrac{1}{2}\sum_{j=1}^{n-1}\left[\dfrac{f_{j+1}-f_{j}}{x_{j+1}-x_{j}}-\dfrac{f_{j}-f_{j-1}}{x_{j}-x_{j-1}}\right]\phi_{j}^{\prime \prime}(x),
\end{eqnarray*}
then
\begin{eqnarray*}
\lim_{c\longrightarrow 0} \kappa_{\mathcal{L}_{{{MQ}}}}(x)=0,
\end{eqnarray*}
which leads to
\begin{eqnarray*}
\forall \epsilon >0 \quad \exists \delta_{1}>0;\quad \lvert \,c\, \rvert < \delta_1 \Rightarrow \lvert \kappa_{\mathcal{L}_{{{MQ}}}}(x) \rvert < \epsilon.
\end{eqnarray*}
Similarly, for RTH RBF, we have
\begin{eqnarray*}
\lim_{c\longrightarrow 0} \phi_{j}^{\prime \prime}(x)=0,
\end{eqnarray*}
then
\begin{eqnarray*}
\lim_{c\longrightarrow 0} \kappa_{\mathcal{L}_{{{RTH}}}}(x)=0,
\end{eqnarray*}
which leads to
\begin{eqnarray*}
\forall \epsilon >0 \quad \exists \delta_{2}>0;\quad \lvert \,c\, \rvert < \delta_2 \Rightarrow \lvert \kappa_{\mathcal{L}_{{{RTH}}}}(x) \rvert < \epsilon.
\end{eqnarray*}
The proof completes by considering $\delta=\min\{\delta_{1},\delta_{2}\}$.
\end{proof}
%%%%%%%%%%%%%%%%%%%%%
\section{Accuracy of the quasi-interpolation operator $\mathcal{L}_{RTH}$}
In this section, we give an approximation order for the quasi-interpolation operator $\mathcal{L}_{RTH}.$
\begin{theorem}
Assume $f^{\prime\prime}$ is Lipschitz continuous. The quasi-interpolation operator $\mathcal{L}_{RTH}f$, at the point set (\ref{points}) as $h\rightarrow 0$, converges as follows
\begin{eqnarray}
\|f-\mathcal{L}_{RTH}f\|_\infty\leq kh^2,
\end{eqnarray}
where $k$ is independent of $h$ and $c$.
\end{theorem}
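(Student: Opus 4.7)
The plan is to exploit the divided-difference form (\ref{divform}) together with the linear-polynomial reproduction property of Remark \ref{rem}, which yields $\sum_j \psi_j(x) = 1$ and $\sum_j x_j\psi_j(x) = x$ on $[x_0,x_n]$. Subtracting a first-order Taylor expansion of $f$ around the evaluation point $x$, I would first rewrite the error as
\[
(\mathcal{L}_{RTH}f)(x) - f(x) \;=\; \sum_{j=0}^{n}\bigl[f(x_j) - f(x) - f'(x)(x_j-x)\bigr]\psi_j(x),
\]
since the polynomial part cancels. The Lipschitz hypothesis on $f''$ gives in particular $\|f''\|_\infty \le M$ for some constant $M$, so Taylor's formula yields $|f(x_j) - f(x) - f'(x)(x_j-x)| \le \tfrac12 M(x_j-x)^2$. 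The problem thus reduces to establishing the uniform bound
\[
\sup_{x\in[x_0,x_n]}\sum_{j=0}^n (x_j-x)^2\,|\psi_j(x)| \;\le\; C\,h^2
\]
with $C$ independent of $h$ and $c$ in the appropriate regime.

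To prove this bound I would compare with the piecewise-linear reference case. Let $t_j(x)$ denote the classical cardinal tent function obtained by formally replacing each $\phi_k$ by $|{\,\cdot\,}-x_k|$ in the definition of $\psi_j$; these have support in $[x_{j-1},x_{j+1}]$, satisfy $0\le t_j\le 1$, and a direct check gives $\sum_j (x_j-x)^2 t_j(x)\le h^2$. Splitting
\[
\sum_j (x_j-x)^2|\psi_j(x)| \;\le\; \sum_j (x_j-x)^2\,t_j(x) + \sum_j (x_j-x)^2\,|\psi_j(x)-t_j(x)|,
\]
the first term is the classical $O(h^2)$ piecewise-linear bound. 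For the correction I would use that $\psi_j-t_j$ is a second divided difference of $\phi_j(\cdot)-|{\,\cdot\,}-x_j|$, which by the earlier Lemma is uniformly bounded by $0.28c$ and, crucially, decays exponentially as $|x-x_j|/c \to \infty$ because $1-\tanh(r/c)\sim 2e^{-2r/c}$. Nodes at distance $|x-x_j|\approx kh$ therefore contribute at most $O\bigl((kh)^2\cdot c\,e^{-2kh/c}/h\bigr)$, and summing a geometric-type series over $k$ yields an $O(h^2)$ total in the regime $c\lesssim h$.

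The main obstacle is precisely this tail estimate and its geometric summation: it is the step at which the RTH kernel outperforms the MQ kernel. For MQ one has $\sqrt{r^2+c^2}-|r|\sim c^2/(2|r|)$, whose slow polynomial decay yields a harmonic-type sum and forces the Wu–Schaback restriction $c^2|\log c|=O(h^2)$; the exponential tail of $1-\tanh$ collapses the analogous sum to a convergent geometric series, so the final constant $k$ can be taken independent of both $h$ and $c$. A minor additional care is required near the endpoints, where the basis functions $\alpha_0,\alpha_1,\alpha_{n-1},\alpha_n$ differ from the interior $\psi_j$; however, these contribute only a finite fixed number of correction terms that can be estimated by the same exponential-decay mechanism and absorbed into $C$.
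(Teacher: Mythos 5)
Your proposal is correct in outline, but after the common opening step it takes a genuinely different route from the paper. Both arguments begin the same way: subtract the first-order Taylor polynomial at the evaluation point $x$ and use the reproduction identities $\sum_j\psi_j(x)=1$ and $\sum_j(x-x_j)\psi_j(x)=0$ of Remark \ref{rem}. The paper then stays with the divided-difference form (\ref{divform}): it bounds $|\phi_j(x)|\le|x-x_j|$ via (\ref{neq2}), extracts a factor $h$ from the Lipschitz modulus of $f''$ acting on the second divided differences, splits the node sum at $|x-x_j|=h$, and estimates the boundary contributions separately as $c_1(x-x_0)^2+c_2(x_n-x)^2$. You instead compare $\psi_j$ with the piecewise-linear cardinal functions $t_j$ and reduce everything to the kernel quantity $\sum_j(x_j-x)^2|\psi_j(x)-t_j(x)|$, which you close using the exponential tail $r\bigl(1-\tanh(r/c)\bigr)\le 2re^{-2r/c}$ and a geometric summation. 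Your route buys two things. First, it isolates exactly where RTH improves on MQ (exponential versus $c^2/(2r)$ tails, hence no $c^2|\log c|$ condition), which the paper's estimate never uses beyond $\phi_j(x)\le|x-x_j|$. Second, it sidesteps the two delicate points of the paper's computation: the far-field term $c_0h\bigl(\int_{|x-t|>h}|x-t|\,dt+O(h)\bigr)$ is of order $h$ rather than $h^2$, and the isolated endpoint terms are of size $(x-x_0)^2+(x_n-x)^2$, not $h^2$; in your decomposition both are absorbed into the classical $O(h^2)$ bound for piecewise-linear interpolation, endpoints included. The price, which you state openly, is the explicit coupling $c=O(h)$ (together with quasi-uniformity of the nodes, since $\psi_j-t_j$ carries a factor of one over the local spacing). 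That restriction is absent from the theorem's claim that $k$ is independent of $c$, but some such coupling is unavoidable --- for fixed $c$ and $f(x)=x^2$ the error stagnates at $O(c^2)$ --- so you are in effect proving the statement in the form in which it actually holds.
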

\begin{proof}
Let $t(y)$ be the local Taylor approximation of $f$ at $y$, that is 
\begin{eqnarray*}
t(y)=f(x)+f'(x)(y-x)\,,\;\; x\in [a,b]
\end{eqnarray*}
According to Remark \ref{rem}, we get
\begin{eqnarray*}
\sum_{j=0}^n(x-x_j)\psi_j(x)=0,\quad \sum_{j=0}^n\psi_j(x)=1.
\end{eqnarray*}
Then we get
\begin{eqnarray*}
\sum_{j=0}^n t(x_j)\psi_j(x)&=&\sum_{j=0}^n\left[f(x)+f'(x)(x_j-x)\right]\psi_j(x)\\
&=&f(x)\sum_{j=0}^n\psi_j(x)+f'(x)\sum_{j=0}^n(x-x_j)\psi_j(x)\\
&=&f(x).
\end{eqnarray*}
Since $f^{\prime \prime}(x)$ is Lipschitz continuous, then for every $x_1,~x_2\in[a,b]$, $|f^{\prime \prime}(x_1)-f^{\prime \prime}(x_2)|\leq c_0|x_1-x_2|,$ where $\displaystyle 0<c_0=\mbox{ess sup}_{a\leq x\leq b}|f^{\prime \prime\prime}(x)|$.
Now according to (\ref{divform}), we have
\begin{eqnarray*}
|\mathcal{L}_{RTH}f(x)-f(x)|&=&\left|\sum_{j=0}^n\left(f(x_j)-t(x_j)\right)\psi_j(x)\right|\\
&\leq &\frac{1}{2}\left|\sum_{j=1}^{n-1}\left(f[x_{j-1},x_j,x_{j+1}]-t[x_{j-1},x_j,x_{j+1}]\right)(x_{j+1}-x_{j-1})\phi_j(x)\right|\\
&&+c_1(x-x_0)^2+c_2(x_n-x)^2\\
&\leq & \frac{1}{4}\sum_{j=1}^{n-1}|f^{\prime \prime}(\xi)-f^{\prime \prime}(\eta)||\phi_j(x)\, (x_{j+1}-x_{j-1})|,\quad (\xi,\eta\in (x_{j-1},x_{j+1}))\\
&&+c_1(x-x_0)^2+c_2(x_n-x)^2\\
&\leq &\frac{1}{2}c_0h\sum_{j=1}^{n-1}|x-x_j|(x_{j+1}-x_{j-1})+c_1(x-x_0)^2+c_2(x_n-x)^2\\
&\leq & \frac{1}{2}c_0h\sum_{\displaystyle\mathop{j=1}_{|x-x_j|\leq h}}^{n-1}|x-x_j|(x_{j+1}-x_{j-1})+ \frac{1}{2}c_0h\sum_{\displaystyle\mathop{j=1}_{|x-x_j|> h}}^{n-1}|x-x_j|(x_{j+1}-x_{j-1})\\
&&+c_1(x-x_0)^2+c_2(x_n-x)^2\\
&\leq &4c_0h^3+c_0h\left(\int_{|x-t|>h}|x-t|~dt+O(h)\right)+c_1(x-x_0)^2+c_2(x_n-x)^2\\
&\leq &k_1h^3+k_2h^2+k_3{(b-a)}^2\\
&\leq &kh^2
\end{eqnarray*}
\end{proof}
%%%%%%%%%%%%%%%
\section{Numerical results}
In this section, we compare the accuracy of the quasi-interpolation operator $\mathcal{L}_{RTH}$ with that of Wu and Schaback, $\mathcal{L}_{MQ}$ (defined in (\ref{meq2})) for the approximation of five functions. We take equidistant center points and choose different shape parameters $c$ and also different step sizes $h$. The maximum absolute error norm
%\begin{eqnarray*}
%\|(\mathcal{L}f)(x)-f(x)\|_{\infty}=\displaystyle\max_{1\leq i\leq m}|(\mathcal{L}f)(x_i)-f(x_i)|,
%\end{eqnarray*}
is then computed for comparing approximation accuracy. 
The rate of convergence is also computed by
$$r_h=\frac{\ln \left(\frac{E_{h_i}}{E_{h_{i-1}}}\right)}{\ln \left(\frac{h_i}{h_{i-1}}\right)},$$
where $E_{h_i}$ indicates the error of the quasi-interpolant $\mathcal{L}_{RTH}f$ corresponding to the parameter $h_i$. In all tests, we chose $m=200$ equidistant evaluation points.
%%%%%%%%%%%%%%%
\subsection{Test problem 1}
In the first test problem, we apply the RTH quasi-interpolation to approximate the function (cf. \cite{fass})
\begin{eqnarray*}
f_{1}(x)=\frac{\sinh (x)}{ 1+\cosh (x)},\quad x\in [-3,3].
\end{eqnarray*}
The results are shown in Tables \ref{t1}-\ref{t3}. In Tables \ref{t1}, \ref{t2}, and \ref{t3}, we set $h=0.1, 0.01, 0.001$, respectively, and $c=2h, h, 0.5h, 0.2h, 0.1h$, then we compute the $\|\mathcal{L}_{RTH}f-f\|_{\infty}$ and $\|\mathcal{L}_{MQ}f-f\|_{\infty}$. In Table \ref{t4}, we set 
$c=0.01$, $h=0.2, 0.1, 0.05, 0.025, 0.0125$, to observe the convergence rate $r_{h}$ of $\mathcal{L}_{RTH}f$ with the variation of $h$.
%%%%%%%%%%%%%%%
\begin{table}[H]
\centering
\caption{\footnotesize{Comparison of approximation accuracy of RTH and MQ quasi-interpolation for the test problem 1.}}\label{t1}
  \scriptsize{
\begin{tabular}{llllll}
  \hline\\
$c$&$0.2$&$0.1$&$0.05$&$0.02$&$0.01$\\
 \hline
$h$&$0.1$&$0.1$&$0.1$&$0.1$&$0.1$\\
$\|\mathcal{L}_{MQ}f-f\|_{\infty}$&$9.3\times 10^{-3}$&$3.1\times 10^{-3}$&$1.1\times 10^{-3}$&$3.8\times 10^{-4}$&$2.8\times 10^{-4}$\\
$\|\mathcal{L}_{RTH}f-f\|_{\infty}$&$2.9\times 10^{-3}$&$6.2\times 10^{-4}$&$7.1\times 10^{-5}$&$2.3\times  10^{-4}$&$2.4\times 10^{-4}$
 \end{tabular}
}
\end{table}
%%%%%%%%%%%%%%%%%%
\begin{table}[H]
\centering
\caption{\footnotesize{Comparison of approximation accuracy of RTH and MQ quasi-interpolation for the test problem 1.}}\label{t2}
  \scriptsize{
\begin{tabular}{llllll}
  \hline\\
$c$&$0.02$&$0.01$&$0.005$&$0.002$&$0.001$\\
 \hline
$h$&$0.01$&$0.01$&$0.01$&$0.01$&$0.01$\\
$\|\mathcal{L}_{MQ}f-f\|_{\infty}$&$1.8\times 10^{-4}$&$5.3\times 10^{-5}$&$1.6\times 10^{-5}$&$3.7\times 10^{-6}$
&$1.4\times 10^{-6}$\\
$\|\mathcal{L}_{RTH}f-f\|_{\infty}$&$3.0\times 10^{-5}$&$6.3\times 10^{-6}$&$7.2\times 10^{-7}$&$1.7\times 10^{-9}$&$7.9\times 10^{-14}$
 \end{tabular}
}
\end{table}
%%%%%%%%%%%%%%%%%%
\begin{table}[H]
\centering
\caption{\footnotesize{Comparison of approximation accuracy of RTH and MQ quasi-interpolation for the test problem 1.}}\label{t3}
  \scriptsize{
\begin{tabular}{llllll}
  \hline\\
$c$&$0.002$&$0.001$&$0.0005$&$0.0002$&$0.0001$\\
 \hline
$h$&$0.001$&$0.001$&$0.001$&$0.001$&$0.001$\\
$\|\mathcal{L}_{MQ}f-f\|_{\infty}$&$2.7\times 10^{-6}$&$7.5\times 10^{-7}$&$2.1\times 10^{-7}$&$4.6\times 10^{-8}$&$1.6\times 10^{-8}$\\
$\|\mathcal{L}_{RTH}f-f\|_{\infty}$&$3.0\times 10^{-7}$&$6.3\times 10^{-8}$&$7.2\times 10^{-9}$&$1.7\times 10^{-11}$&$ 1.1\times 10^{-15}$
 \end{tabular}
}
\end{table}
%%%%%%%%%%%%%%%%%%
\begin{table}[H]
\centering
\caption{\footnotesize{Convergence rates of $\mathcal{L}_{RTH}f$ by using $c=0.01$, $h=0.2, 0.1, 0.05, 0.025, 0.0125$ for the test problem 1.}}\label{t4}
  \scriptsize{
\begin{tabular}{lllllll}
  \hline\\
$c$&$0.01$&$0.01$&$0.01$&$0.01$&$0.01$\\
 \hline
$h$&$0.2$&$0.1$&$0.05$&$0.025$&$0.0125$\\
$\|\mathcal{L}_{RTH}f-f\|_{\infty}$&$9.5\times 10^{-4}$&$2.4\times 10^{-4}$&$5.4\times 10^{-5}$&$5.1\times 10^{-6}$&$1.0\times 10^{-6}$\\
$r_h$&-&$1.9855$&$2.1657$&$3.4056$&$2.3028$
 \end{tabular}
}
\end{table}
%%%%%%%%%%%%%%%%%%
\subsection{Test problem 2}
In this experiment we apply the RTH quasi-interpolation to approximate the function (again considered in \cite{fass})
\begin{eqnarray}
f_{2}(x)=\sin\left(\frac{x}{2}\right)-2\cos(x)+4\sin(\pi x),\quad x\in [-4,4].
\end{eqnarray}
The comparison results are shown in Tables \ref{t5}-\ref{t7}. In Tables \ref{t5}, \ref{t6}, and \ref{t7}, we set $h=0.1, 0.01, 0.001$, respectively, and $c=2h, h, 0.5h, 0.2h, 0.1h$, then we compute the $\|\mathcal{L}_{RTH}f-f\|_{\infty}$ and $\|\mathcal{L}_{MQ}f-f\|_{\infty}$. In Table \ref{t8}, we set 
$c=0.01$, $h=0.2, 0.1, 0.05, 0.025, 0.0125$, to observe the convergence rate $r_{h}$ of $\mathcal{L}_{RTH}f$ with the variation of $h$.
%%%%%%%%%%%%%%%
\begin{table}[H]
\centering
\caption{\footnotesize{Comparison of approximation accuracy of RTH and MQ quasi-interpolation for the test problem 2.}}\label{t5}
  \scriptsize{
\begin{tabular}{llllll}
  \hline\\
$c$&$0.2$&$0.1$&$0.05$&$0.02$&$0.01$\\
 \hline
$h$&$0.1$&$0.1$&$0.1$&$0.1$&$0.1$\\
$\|\mathcal{L}_{MQ}f-f\|_{\infty}$&$1.2$&$4.5\times 10^{-1}$&$1.7\times 10^{-1}$&$7.1\times 10^{-2}$&$5.4\times 10^{-2}$\\
$\|\mathcal{L}_{RTH}f-f\|_{\infty}$&$4.5\times 10^{-1}$&$1.2\times 10^{-1}$&$1.4\times 10^{-2}$&$4.5\times 10^{-2}$&$4.9\times 10^{-2}$
 \end{tabular}
}
\end{table}
%%%%%%%%%%%%%%%%%%
\begin{table}[H]
\centering
\caption{\footnotesize{Comparison of approximation accuracy of RTH and MQ quasi-interpolation for the test problem 2.}}\label{t6}
  \scriptsize{
\begin{tabular}{llllll}
  \hline\\
$c$&$0.02$&$0.01$&$0.005$&$0.002$&$0.001$\\
 \hline
$h$&$0.01$&$0.01$&$0.01$&$0.01$&$0.01$\\
$\|\mathcal{L}_{MQ}f-f\|_{\infty}$&$3.0\times 10^{-2}$&$9.2\times 10^{-3}$&$2.9\times 10^{-3}$&$7.1\times 10^{-4}$&$2.8\times 10^{-4}$\\
$\|\mathcal{L}_{RTH}f-f\|_{\infty}$&$6.4\times 10^{-3}$&$1.4\times 10^{-3}$&$1.5\times 10^{-4}$&$3.7\times 10^{-7}$&$1.7\times 10^{-11}$
 \end{tabular}
}
\end{table}
%%%%%%%%%%%%%%%%%%
\begin{table}[H]
\centering
\caption{\footnotesize{Comparison of approximation accuracy of RTH and MQ quasi-interpolation for the test problem 2.}}\label{t7}
  \scriptsize{
\begin{tabular}{llllll}
  \hline\\
$c$&$0.002$&$0.001$&$0.0005$&$0.0002$&$0.0001$\\
 \hline
$h$&$0.001$&$0.001$&$0.001$&$0.001$&$0.001$\\
$\|\mathcal{L}_{MQ}f-f\|_{\infty}$&$4.9\times 10^{-4}$&$1.4\times 10^{-4}$&$4.1\times 10^{-5}$&$9.0\times 10^{-6}$&$3.3\times 10^{-6}$\\
$\|\mathcal{L}_{RTH}f-f\|_{\infty}$&$6.4\times 10^{-5}$&$1.4\times 10^{-5}$&$1.5\times 10^{-6}$&$3.7\times 10^{-9}$&$1.7\times 10^{-13}$
 \end{tabular}
}
\end{table}
%%%%%%%%%%%%%%%%%%
\begin{table}[H]
\centering
\caption{\footnotesize{Convergence rates of $\mathcal{L}_{RTH}f$ by using $c=0.01$, $h=0.2, 0.1, 0.05, 0.025, 0.0125$ for the test problem 2.}}\label{t8}
  \scriptsize{
\begin{tabular}{llllll}
  \hline\\
$c$&$0.01$&$0.01$&$0.01$&$0.01$&$0.01$\\
 \hline
$h$&$0.2$&$0.1$&$0.05$&$0.025$&$0.0125$\\
$\|\mathcal{L}_{RTH}f-f\|_{\infty}$&$2.0\times 10^{-1}$&$4.9\times 10^{-2}$&$1.1\times 10^{-2}$&$1.1\times 10^{-3}$&$2.7\times 10^{-4}$\\
$r_h$&-&$2.0101$&$2.0899$&$3.4262$&$2.0034$
 \end{tabular}
}
\end{table}
%%%%%%%%%%%%%%%%%%
\subsection{Test problem 3}
Consider the function (see again \cite{fass})
\begin{eqnarray}
f_{3}(x) = 10e^{-x^{2}}+x^{2},\quad x\in [-3,3],
\end{eqnarray}
for approximating by the RTH quasi-interpolation operator.
The comparison results are shown in Tables \ref{t9}-\ref{t11}. In Tables \ref{t9}, \ref{t10}, and \ref{t11}, we set $h=0.1, 0.01, 0.001$, respectively, and $c=2h, h, 0.5h, 0.2h, 0.1h$, then we compute the $\|\mathcal{L}_{RTH}f-f\|_{\infty}$ and $\|\mathcal{L}_{MQ}f-f\|_{\infty}$. In Table \ref{t12}, we set 
$c=0.01$, $h=0.2, 0.1, 0.05, 0.025, 0.0125$, to observe the convergence rate $r_{h}$ of $\mathcal{L}_{RTH}f$ on varying $h$.
%%%%%%%%%%%%%%%
\begin{table}[H]
\centering
\caption{\footnotesize{Comparison of approximation accuracy of RTH and MQ quasi-interpolation for the test problem 3.}}\label{t9}
  \scriptsize{
\begin{tabular}{llllll}
  \hline\\
$c$&$0.2$&$0.1$&$0.05$&$0.02$&$0.01$\\
 \hline
$h$&$0.1$&$0.1$&$0.1$&$0.1$&$0.1$\\
$\|\mathcal{L}_{MQ}f-f\|_{\infty}$&$ 4.9\times 10^{-1}$&$2.0\times 10^{-1}$&$7.4\times 10^{-2}$&$3.1\times 10^{-2}$&$2.4\times 10^{-2}$\\
$\|\mathcal{L}_{RTH}f-f\|_{\infty}$&$2.2\times 10^{-1}$&$5.5\times 10^{-2}$&$6.4\times 10^{-3}$&$2.0\times 10^{-2}$&$2.1\times 10^{-2}$
 \end{tabular}
}
\end{table}
%%%%%%%%%%%%%%%%%%
\begin{table}[H]
\centering
\caption{\footnotesize{Comparison of approximation accuracy of RTH and MQ quasi-interpolation for the test problem 3.}}\label{t10}
  \scriptsize{
\begin{tabular}{llllll}
  \hline\\
$c$&$0.02$&$0.01$&$0.005$&$0.002$&$0.001$\\
 \hline
$h$&$0.01$&$0.01$&$0.01$&$0.01$&$0.01$\\
$\|\mathcal{L}_{MQ}f-f\|_{\infty}$&$1.3\times 10^{-2}$&$4.0\times 10^{-3}$&$1.3\times 10^{-3}$&$3.1\times 10^{-4}$&$1.2\times 10 ^{-4}$\\
$\|\mathcal{L}_{RTH}f-f\|_{\infty}$&$2.8\times 10^{-3}$&$5.9\times 10^{-4}$&$6.7\times 10^{-5}$&$1.6\times 10^{-7}$&$7.4\times 10^{-12}$
 \end{tabular}
}
\end{table}
%%%%%%%%%%%%%%%%%%
\begin{table}[H]
\centering
\caption{\footnotesize{Comparison of approximation accuracy of RTH and MQ quasi-interpolation for the test problem 3.}}\label{t11}
  \scriptsize{
\begin{tabular}{llllll}
  \hline\\
$c$&$0.002$&$0.001$&$0.0005$&$0.0002$&$0.0001$\\
 \hline
$h$&$0.001$&$0.001$&$0.001$&$0.001$&$0.001$\\
$\|\mathcal{L}_{MQ}f-f\|_{\infty}$&$2.1\times 10^{-4}$&$6.0\times 10^{-5}$&$1.8\times 10^{-5}$&$3.9\times 10^{-6}$&$1.4\times 10^{-6}$\\
$\|\mathcal{L}_{RTH}f-f\|_{\infty}$&$2.8\times 10^{-5}$&$5.9\times 10^{-6}$&$6.7\times 10^{-7}$&$1.6\times 10^{-9}$&$7.5\times 10^{-14}$
 \end{tabular}
}
\end{table}
%%%%%%%%%%%%%%%%%%
\begin{table}[H]
\centering
\caption{\footnotesize{Convergence rates of $\mathcal{L}_{RTH}f$ by using $c=0.01$, $h=0.2, 0.1, 0.05, 0.025, 0.0125$; Test Problem 3.}}\label{t12}
  \scriptsize{
\begin{tabular}{llllll}
  \hline\\
$c$&$0.01$&$0.01$&$0.01$&$0.01$&$0.01$\\
 \hline
$h$&$0.2$&$0.1$&$0.05$&$0.025$&$0.0125$\\
$\|\mathcal{L}_{RTH}f-f\|_{\infty}$&$8.6\times 10^{-2}$&$2.1\times 10^{-2}$&$5.0\times 10^{-3}$&$4.7\times 10^{-4}$&$2.6\times 10^{-4}$\\
$r_h$&-&$2.0085$&$2.0943$&$3.4210$&$2.0419$
 \end{tabular}
}
\end{table}
%%%%%%%%%%%%%%%%
\begin{remark}
By analyzing the results in Tables \ref{t1}-\ref{t3}, \ref{t5}-\ref{t7}, and \ref{t9}-\ref{t11}, we see that the accuracy of the RTH quasi-interpolation scheme is
dependent on the shape parameter $c$ and on step size $h$. Furthermore, the accuracy of the RTH quasi-interpolation operator is
better than that of MQ for the same values of $c$ and $h$.
From Tables \ref{t4}, \ref{t8}, \ref{t12}, we see that the convergence rate of $\mathcal{L}_{RTH}$ reaches up to $2$ which justifies our theoretical findings of Section 3. 
By these numerical experiments, we can say that the quasi-interpolation $\mathcal{L}_{RTH}$ is a very attractive alternative, in terms of accuracy and convergence, to $\mathcal{L}_{MQ}$.
\end{remark}
%%%%%%%%%%%%%%%%%%%%%%%%%%%%%%%
\subsection{Test problem 4 (Runge function)}
Let us consider the Runge function on $[-1,1]$, that is $\displaystyle f_4(x)=\frac{1}{1+25x^{2}} $.
Figure \ref{runge-exapp} shows the exact and approximate values of $f_4$ for $c=0.01,~h=0.1,~0.02$. In Figure \ref{runge-exapp}, we see that the Runge phenomenon has disappeared by decreasing $h$. Relative errors are shown in Figure \ref{runge-err} using the RTH quasi-interpolation operator.
%%%%%%%%%%%
  \begin{figure}[H]
\centering
\begin{subfigure}[h]{0.40\textwidth}
\includegraphics[width=\textwidth]{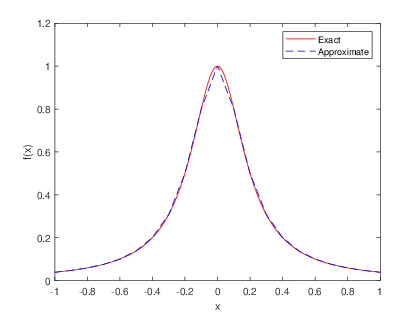}
\caption{}
\end{subfigure}
\begin{subfigure}[h]{0.40\textwidth}
\includegraphics[width=\textwidth]{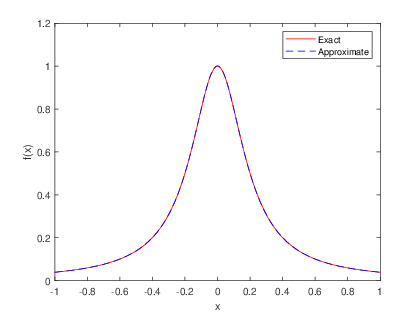}
 \caption{}
\end{subfigure}
\caption{RTH quasi-interpolation of  $f_4(x)=\frac{1}{1+25x^{2}};$ $h=0.1$ (a), $h=0.02$ (b), and $c=0.01.$}\label{runge-exapp}
\end{figure}  
%%%%%%%%%%%%%%%%%%%%%%%%%
 \begin{figure}[H]
\centering
\begin{subfigure}[h]{0.32\textwidth}
\includegraphics[width=\textwidth]{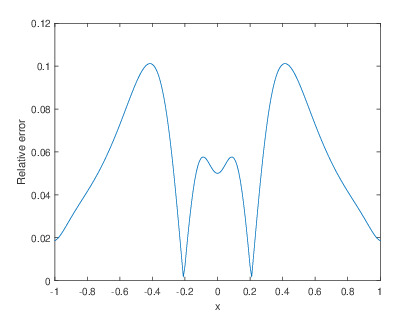}
\caption{}
\end{subfigure}
\begin{subfigure}[h]{0.32\textwidth}
\includegraphics[width=\textwidth]{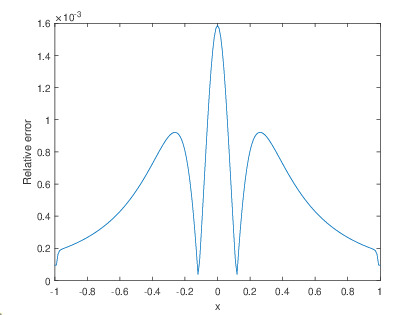}
 \caption{}
\end{subfigure}
\begin{subfigure}[h]{0.32\textwidth}
\includegraphics[width=\textwidth]{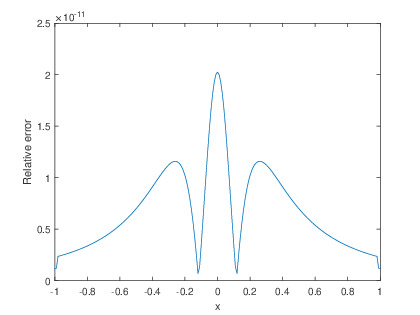}
\caption{}
\end{subfigure}
\caption{Relative errors: $c=0.1$ (a), $c=0.01$ (b), $c=0.001$ (c), and $h=0.02;$ for the test problem 4.}\label{runge-err}
\end{figure}
%%%%%%%
%%%%%%%%%%%%%%%%%%%%%
\subsection{Test problem 5 (Gibbs Phenomenon)}
It is well-known that any global or high order approximation method suffers from the Gibbs phenomenon if the function has a jump discontinuity in the given domain. In this test problem, we show that the RTH quasi-interpolation operator substantially mitigates the Gibbs phenomenon (cf. \cite{mohemerr}).
\begin{eqnarray*}
f_5(x)=\left\{\begin{array}{ll}
\frac{10}{3}x,& 0\leq x\leq 0.3,\\
1,&0.3\leq x\leq 0.6,\\
0,& 0.6<x\leq 1.
\end{array}
\right.
\end{eqnarray*}
Figure \ref{gib-exapp} shows the exact and approximate values of $f_5$. In Figure \ref{gib-exapp}, we see that the Gibbs
oscillations are considerably attenuated by decreasing $c$. Relative errors are reported in Figure \ref{gib-err}.
%%%%%%%%%%%%
 \begin{figure}[H]
\centering
\begin{subfigure}[h]{0.32\textwidth}
\includegraphics[width=\textwidth]{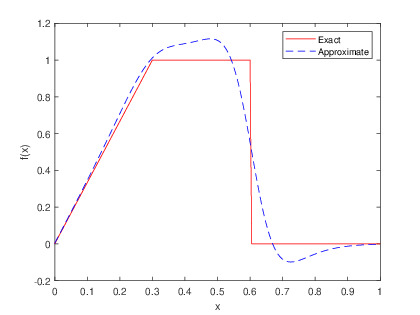}
\caption{}
\end{subfigure}
\begin{subfigure}[h]{0.32\textwidth}
\includegraphics[width=\textwidth]{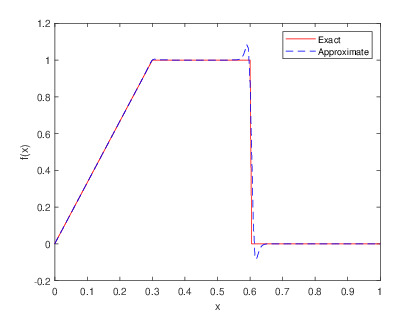}
 \caption{}
\end{subfigure}
\begin{subfigure}[h]{0.32\textwidth}
\includegraphics[width=\textwidth]{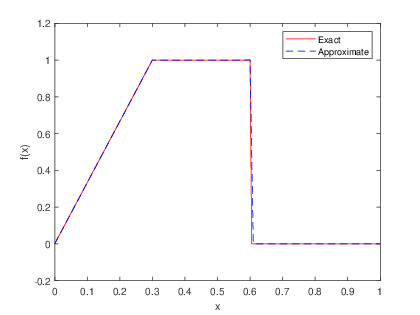}
\caption{}
\end{subfigure}
\caption{Approximations of $f_5$ with RTH quasi-interpolation; $c=0.1$ (a), $c=0.01$ (b),  $c=0.001$ (c), and $h=0.01.$}\label{gib-exapp}
\end{figure}
%%%%%%%%%%5%%%%%%%%%%%%%%%
 \begin{figure}[H]
\centering
\begin{subfigure}[h]{0.32\textwidth}
\includegraphics[width=\textwidth]{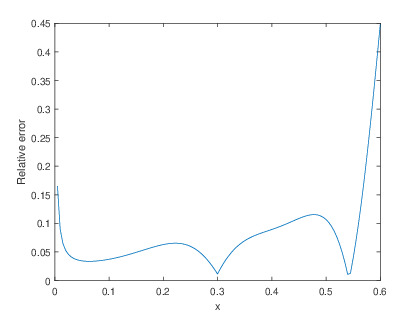}
\caption{}
\end{subfigure}
\begin{subfigure}[h]{0.32\textwidth}
\includegraphics[width=\textwidth]{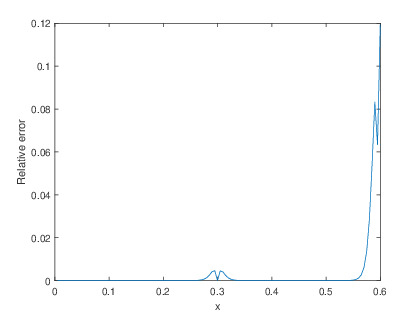}
 \caption{}
\end{subfigure}
\begin{subfigure}[h]{0.32\textwidth}
\includegraphics[width=\textwidth]{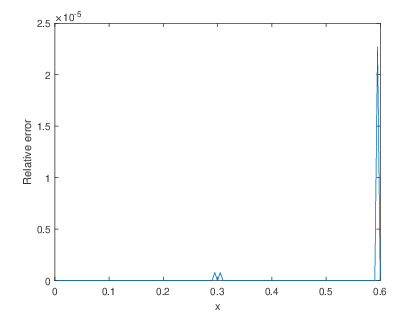}
\caption{}
\end{subfigure}
\caption{Relative errors: $c=0.1$ (a), $c=0.01$ (b),  $c=0.001$ (c), and $h=0.01;$ Test problem 5.}\label{gib-err}
\end{figure}
%%%%%%%%%%%%%%%%%%%%%%%%%%
\subsection{Test problem 6 (A piecewise analytic function)}
As a final example, we consider the piecewise analytic function (cf. \cite{note})
\begin{eqnarray*}\label{af}
f_6(x)=\left\{\begin{array}{ll}
\sin(x),& x<0,\\
\cos(x),&x>0,
\end{array}
\right.
\end{eqnarray*} 
with $x\in[-1,1]$.
Figure \ref{gib-exapp2} shows the exact and approximate values of $f_6$, where Gibbs oscillations are considerably attenuated by decreasing $c$. Relative errors are shown in Figure \ref{gib-err2}.
%%%%%%%%%%%%
 \begin{figure}[H]
\centering
\begin{subfigure}[h]{0.32\textwidth}
\includegraphics[width=\textwidth]{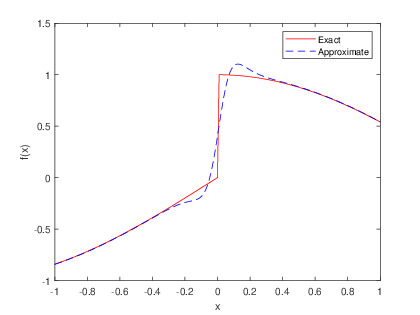}
\caption{}
\end{subfigure}
\begin{subfigure}[h]{0.32\textwidth}
\includegraphics[width=\textwidth]{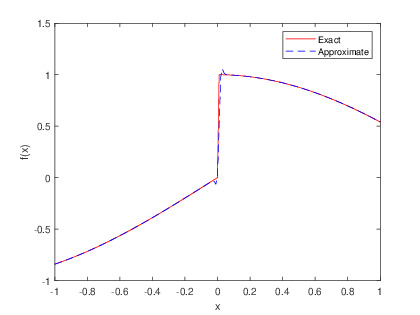}
 \caption{}
\end{subfigure}
\begin{subfigure}[h]{0.32\textwidth}
\includegraphics[width=\textwidth]{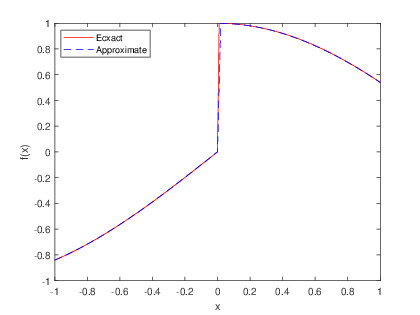}
\caption{}
\end{subfigure}
\caption{RTH quasi-interpolation of the piecewise analytic function $f_6$; $c=0.1$ (a), $c=0.01$ (b),  $c=0.001$ (c), and $h=0.02.$}\label{gib-exapp2}
\end{figure}
%%%%%%%%%%5%%%%%%%%%%%%%%%
 \begin{figure}[H]
\centering
\begin{subfigure}[h]{0.32\textwidth}
\includegraphics[width=\textwidth]{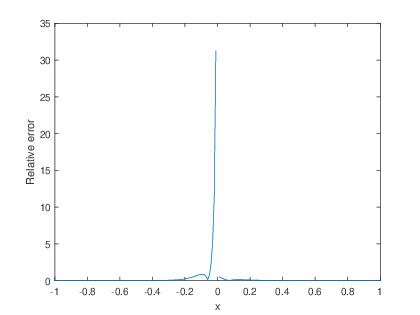}
\caption{}
\end{subfigure}
\begin{subfigure}[h]{0.32\textwidth}
\includegraphics[width=\textwidth]{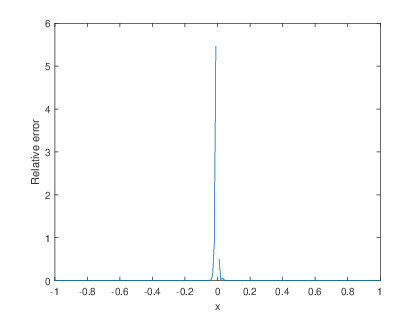}
 \caption{}
\end{subfigure}
\begin{subfigure}[h]{0.32\textwidth}
\includegraphics[width=\textwidth]{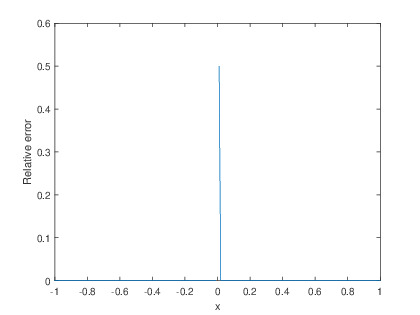}
\caption{}
\end{subfigure}
\caption{Relative errors: $c=0.1$ (a), $c=0.01$ (b),  $c=0.001$ (c), and $h=0.02;$ Test problem 6.}\label{gib-err2}
\end{figure}
%%%%%%%%%%%%%%%%%%%%%%%%%%%
\section{Conclusion}
In this paper, an efficient shape preserving quasi-interpolation operator with high degree of smoothness and very accurate results is proposed. It is based on the reformulation of Wu–Schaback’s quasi-interpolation operator by a new transcendental RBF of the form $\phi(r)=r\tanh\left({r \over c}\right)$. The 
quasi-interpolation operator, called ${\cal L}_{RTH}$ has nice convergence properties, being $\|{\cal L}_{RTH}-f\|_\infty \le k\, h^2$, with $h$ being the step size and $k$ a positive constant independent on the shape parameter $c$ and the step size $h$ (cf. Theorem 3.1).
Numerical experiments reveal that the proposed quasi-interpolation operator not only gives very accurate results but also it does not suffer of the Runge and Gibbs phenomena (see Test problems 4-6). \\ As a future work we are working in the application of the operator to real worlds problems, in particular to irregular surfaces approximation and image segmentation.
%%%%%%%%%%%%%%%%%%%%%%%%
\vskip 0.2in
{\bf Acknowledgments}. The third author thanks for the support the GNCS-INdAM. His research has been done within the Italian Network on Approximation (RITA) and the thematic group on "Approximation Theory and Applications" of the Italian Mathematical Union.

\bibliographystyle{plain}
\bibliography{mybib}
\end{document}